\newcommand{\RR}{{\mathbb R}}
\newcommand{\CC}{{\mathbb C}}
\newcommand{\DD}{{\mathbb D}}
\newcommand{\TT}{{\mathbb T}}
\newcommand{\ZZ}{{\mathbb Z}}
\newcommand{\NN}{{\mathbb N}}
\def\ii{\mathrm{i}}
\def\ee{\mathrm{e}}
\def\im{\mbox{\rm Im}}
\newcommand{\norm}[1]{|{#1}|}
\newcommand{\snorm}[1]{\lVert#1\rVert}     
\def\dif{{ \rm d}}
\def\R{{\mathcal R}}
\def\cO{{\mathcal O}}
\def\NF{N}
\def\eps{\varepsilon}
\newcommand{\INF}{\mathcal{I}_{\NF}}
\def\hrho{{\hat\rho}}
\def\trho{{\tilde\rho}}
\newcommand{\nuNi}[1]{\nu_{#1}} 
\newcommand{\muNi}[1]{\mu_{#1}}
\theoremstyle{plain}
\newtheorem{definition}{Definition}
\newtheorem{theorem}{Theorem}
\newtheorem{corollary}{Corollary}
\newtheorem{remark}{Remark}
\title{On the sharpness of the R\"ussmann estimates}
\author{
Jordi-Llu\'{\i}s Figueras\thanks{figueras@math.uu.se}\\
{\footnotesize Department of Mathematics} \\
{\footnotesize Uppsala University} \\
{\footnotesize Box 480, 751 06 Uppsala (Sweden).}
\and
Alex Haro\thanks{alex@maia.ub.es}\\
{\footnotesize Departament de Matem\`atiques i Inform\`atica}\\
{\footnotesize Universitat de Barcelona} \\
{\footnotesize Gran Via 585, 08007 Barcelona (Spain).}
\and
Alejandro Luque\thanks{luque@icmat.es}\\
{\footnotesize Instituto de Ciencias Matem\'aticas} \\
{\footnotesize Consejo Superior de Investigaciones Cient\'{\i}ficas} \\
{\footnotesize C/ Nicol\'as Cabrera 13-15, 28049 Madrid (Spain).}}
\begin{document}
\maketitle

\thispagestyle{empty}

\begin{abstract}
Estimating the norm of the solution of the linear difference equation
$u(\theta)-u(\theta+\omega)=v(\theta)$ plays
a fundamental role in KAM theory.  Optimal (in certain
sense) estimates for the solution of this equation were provided by R\"ussmann
in the mid 70's. The aim of this paper is to compare the sharpness of these
classic estimates with more specific estimates obtained with the help of the
computer. We perform several experiments to quantify the improvement
obtained when using computer assisted estimates.
By comparing these estimates with the actual norm of the solution,
we can analyze the different sources of overestimation, thus
encouraging future improvements.
\end{abstract}


\noindent \emph{Keywords}: KAM theory; cohomologic equations;
R\"ussmann estimates; small divisors.


\newpage

\section{Introduction}\label{sec:intro}

Given a zero-average periodic function $v: \TT^n \rightarrow \CC$ and a vector
$\omega\in\RR^n$, we consider the linear difference equation
\begin{equation}\label{eq:russmann}
u(\theta)-u(\theta+\omega)=v(\theta)\,.
\end{equation}
This cohomological equation plays a central role in KAM theory, and arises for
example in the proof of the existence of invariant tori for symplectic maps
(see \cite{Moser62,Russmann76b} and \cite{Arnold63a,Arnold63b,
BroerHS96,Llave01,Kolmogorov54,Moser67} for other contexts). A typical KAM
scheme consists in performing a Newton-like iteration process such that
Equation~\eqref{eq:russmann} has to be solved at every iteration
step. The convergence of the procedure in a certain scale of Banach spaces of regular
functions (endowed with a suitable norm) is ensured by estimating the norm of
the solution of \eqref{eq:russmann} in such scale.  The optimality of these
estimates is crucial for applying the KAM theorem in specific examples and
determining the size of the perturbation for which invariant tori
persist\footnote{The interested reader is referred to Section 1.4
in~\cite{CellettiC07} for a brief history and references of the application of
KAM theory, and to~\cite{CellettiC97,Locatelli98,LocatelliG00} for computer
assisted proofs in problems of celestial mechanics.}.  These optimal estimates
were provided by R\"ussmann in the celebrated paper \cite{Russmann76a}
(see~\cite{Russmann75} for the study of the analogous linear differential
equation).

A computer-assisted methodology to apply the KAM theory in particular problems,
based on the so-called a posteriori approach, 
has been recently presented in~\cite{FiguerasHL}. 
A direct application of this methodology permitted to
prove the existence of the golden invariant curve for the standard map
up to a perturbation parameter $\eps=0.9716$, which is less than 0.004\% below
the breakdown threshold observed numerically
(see~\cite{Greene79} for a semi-numeric criteria and
\cite{CellettiC88,LlaveR90,LlaveR91} for previous computer assisted proofs).
One of the technical improvements presented in~\cite{FiguerasHL}
is the use of sharper R\"ussmann estimates, obtained with the help
of the computer. The idea behind these estimates is to compute
explicitly a finite number of divisors, and use Diophantine properties
to control the remaining ones.

The aim of this paper is to present a suitable and detailed illustration of the
improved R\"ussmann estimates used in~\cite{FiguerasHL}, and to compare this
estimates both with the classic R\"ussmann ones and with a good numerical
approximation of the solution. To this end, we use suitably chosen test functions and discuss
the dependence on the different parameters of the problem. Hence, we quantify
the improvement of the computer assisted estimates with respect to the classic
ones, and also measure the error with respect to the actual norm.

The content of the paper is organized as follows.
In Section~\ref{sec:setting} we formalize the problem outlined above.
After recalling some elementary notation on analytic
periodic functions and norms,
we revisit the R\"ussmann estimates to control the
regularity of the solutions of Equation~\eqref{eq:russmann}.
We introduce some convenient notation to analyze the different
sources of overestimation. Finally, we include some
useful estimates to control the norm of an analytic
function using discrete Fourier transform.
In Section~\ref{sec:norm} we describe a methodology to enclose the norm
of an analytic function.  We illustrate quantitatively that the computation of
a finite number of small divisors is enough to 
numerically capture the analytic norm of the solutions.
In Section~\ref{sec:numerical1} we present some numerical studies of the
behavior of the norm of the solutions of Equation~\eqref{eq:russmann}, and we
compare it with the R\"ussmann estimates. Due to the large number of
parameters in the problem, we consider several situations separately.
In Section~\ref{sec:conclusions} we summarize the conclusions of our
study, and present some conjectures derived from the
experiments in Section~\ref{sec:numerical1},
thus encouraging future research.

\section{Notation and basic results}\label{sec:setting}

To present the previous statements in a precise way, we need to introduce some
notation regarding analytic functions on the torus.  We 
use standard notation for the real torus $\TT^n=\RR^n/\ZZ^n$ and the closed complex disk
$\bar \DD = \{ z \in \CC\,:\, |z| \leq 1\}$.
A complex strip of  $\TT^n$ of width $\rho>0$ is defined
as
\[
\TT^{n}_{\rho}= 
\left\{\theta\in \CC^n / \ZZ^n \ : \ \norm{\im{\,\theta_{i}}} < \rho, 
\,i=1,\dots, n\right\}.
\]

We consider analytic functions $u:\TT^n_\rho\to\CC$ continuous
up to the boundary of $\TT^n_{\rho}$, and endow these functions with the norm
\begin{equation}\label{eq:norm1}
\snorm{u}_{\rho}  = 
\sup_{\theta \in \TT^n_{\rho}}  \norm{u(\theta)}.
\end{equation}
Moreover, we denote the Fourier expansion by
\begin{equation}\label{eq:fourier:coef}
u(\theta)=\sum_{k \in \ZZ^n} \hat u_k \ee^{2\pi \ii k \cdot \theta}, \qquad \hat u_k =
\int_{[0,1]^n} u(\theta) \ee^{-2 \pi \ii k \cdot \theta} \dif \theta.
\end{equation}

\subsection{Estimates for the solution of the cohomological equation}
\label{ssec:estimates}

If $v$ is a function with zero average ($\hat v_0 = 0$) and 
$\omega$ is ergodic, then Equation~\eqref{eq:russmann}
has a formal solution $u:=\R v$ given by
\begin{equation}\label{eq:small:formal}
\R v(\theta) = \sum_{k \in \ZZ^n \backslash \{0\} } \hat u_k \mathrm{e}^{2\pi
\mathrm{i} k \cdot  \theta}, \qquad \hat u_k = \frac{\hat v_k}{1-\mathrm{e}^{2\pi
\mathrm{i}  k \cdot \omega}}.
\end{equation}
Notice that all solutions of Equation~\eqref{eq:russmann}
differ by a constant, since $\hat u_0$ is free.

Due to the effect of the
small divisors $1-\mathrm{e}^{2\pi \mathrm{i}  k \cdot \omega}$ in
Equation~\eqref{eq:small:formal}, it is well known that ergodicity is not enough to
ensure regularity of the
solution.
Regularity is obtained by imposing non-resonant 
conditions on $\omega$.
In this
paper, we consider the following classic condition:
\begin{definition}\label{def-Diophantine}
Given $\gamma>0$ and $\tau\geq n$, we say that $\omega\in\RR^n$ is a
$(\gamma, \tau)$-Diophantine vector of frequencies
if
\begin{equation}\label{eq:Diophantine}
\norm{k \cdot \omega-m}\ge \,\gamma\, |k|_1^{-\, \tau}, 
\qquad  \forall k\in\ZZ^n\backslash\{0\}, \, m\in\ZZ , 
\end{equation}
where $|k|_1 = \sum_{i= 1}^n |k_i|$.
\end{definition}
Notice that if $\omega$ is $(\gamma, \tau)$-Diophantine, then it is also
$(\tilde \gamma, \tilde \tau)$-Diophantine with $\tilde \gamma \leq \gamma$ and
$\tilde \tau \geq  \tau$. The set of $(\gamma, \tau)$-Diophantine vectors has
positive Lebesgue measure if $\tau>n$.

The following result provides sufficient conditions to
control, with explicit estimates, the norm of the solution of
Equation~\eqref{eq:russmann}:

\begin{theorem}\label{lem-Russmann}
Let $\omega\in \RR^n$ be a $(\gamma,\tau)$-Diophantine frequency vector, for
certain $\gamma>0$ and $\tau\geq n$.
Then, for any analytic function $v: \TT_\rho^n \rightarrow \CC$, with
$\snorm{v}_\rho < \infty$ and $\rho>0$,  there exists a unique zero-average
analytic solution 
$u:  \TT_\rho^n \rightarrow \CC$ of~\eqref{eq:russmann}, with $u=\R v$.
Moreover, given $L\in \NN$, for any $0<\delta<\rho$ we have 
\[
\snorm{u}_{\rho-\delta} \leq 
\frac{c_R(\delta)}{\gamma\delta^{\tau}}
\snorm{v}_{\rho},
\]
where
\begin{equation}\label{eq:russmann:mejor}
c_R(\delta) =\sqrt{\gamma^2 \delta^{2\tau} 2^{n}
\sum_{0<|k|_1\leq L} \frac{\mathrm{e}^{-4\pi |k|_1\delta}}{4 |\sin(\pi k
\cdot \omega)|^2}  + 2^{n-3} \zeta(2,2^\tau) (2\pi)^{-2\tau}
\int_{4\pi\delta (L+1)}^\infty u^{2\tau} e^{-u}\ \dif u },
\end{equation}
and $\zeta(a,b)=\sum_{j\geq 0} (b+j)^{-a}$ is the Hurwitz zeta function.
\end{theorem}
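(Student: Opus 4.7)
The plan is to start from the formal Fourier series of the solution and then to estimate its sup norm by a careful Cauchy--Schwarz argument that separates the amplitude data (which involves $\hat v_k$ and is controlled by $\|v\|_\rho$) from the small-divisor data (which involves $|\sin(\pi k\cdot\omega)|$ and is controlled by the Diophantine condition).

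First I would argue that, under the hypotheses, the formal series $\R v$ actually defines a zero-average analytic function on $\TT^n_\rho$: the Diophantine bound
$|1-\ee^{2\pi\ii k\cdot\omega}|=2|\sin(\pi k\cdot\omega)|\ge 4\gamma|k|_1^{-\tau}$
combined with the exponential decay $|\hat v_k|\le\|v\|_\rho\ee^{-2\pi|k|_1\rho}$ gives absolutely convergent Fourier coefficients $\hat u_k=\hat v_k/(1-\ee^{2\pi\ii k\cdot\omega})$. Uniqueness within zero-average analytic functions follows because any two solutions differ by a constant.

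The core of the proof is the norm estimate. For $\theta\in\TT^n_{\rho-\delta}$ I would bound $|u(\theta)|$ term by term in Fourier and write each summand as a product:
\[
|u(\theta)|\le \sum_{k\ne 0}\bigl(|\hat v_k|\ee^{2\pi|k|_1\rho}\bigr)\cdot\frac{\ee^{-2\pi|k|_1\delta}}{2|\sin(\pi k\cdot\omega)|}.
\]
Squaring and applying Cauchy--Schwarz separates the two factors:
\[
|u(\theta)|^2\le \Bigl(\sum_{k}|\hat v_k|^2\ee^{4\pi|k|_1\rho}\Bigr)\Bigl(\sum_{k\ne 0}\frac{\ee^{-4\pi|k|_1\delta}}{4|\sin(\pi k\cdot\omega)|^2}\Bigr).
\]
For the first factor I would use a Parseval-type identity on the complex torus: $\int_{\TT^n}|v(\theta+\ii\eta)|^2\,\dif\theta=\sum_k|\hat v_k|^2\ee^{-4\pi k\cdot\eta}$, and then average over the $2^n$ extreme vectors $\eta=(\pm\rho,\dots,\pm\rho)$. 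Using $2\cosh(x)\ge\ee^{|x|}$ componentwise, this yields $\sum_k|\hat v_k|^2\ee^{4\pi|k|_1\rho}\le 2^n\|v\|_\rho^2$, which is exactly the $2^n$ factor appearing inside $c_R(\delta)^2$.

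Finally, I would split the remaining sum at $|k|_1=L$. The finite piece $\sum_{0<|k|_1\le L}$ is left untouched (this is the part the paper wants computed with the assistance of the computer). For the tail $|k|_1>L$ I would apply $|\sin(\pi x)|\ge 2\|x\|_\ZZ$ together with the Diophantine inequality to obtain $4|\sin(\pi k\cdot\omega)|^2\ge 16\gamma^2|k|_1^{-2\tau}$, which gives
\[
\sum_{|k|_1>L}\frac{\ee^{-4\pi|k|_1\delta}}{4|\sin(\pi k\cdot\omega)|^2}\le\frac{1}{16\gamma^2}\sum_{j>L}N_j\,j^{2\tau}\ee^{-4\pi j\delta},
\]
with $N_j=\#\{k\in\ZZ^n:|k|_1=j\}$. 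To match the precise integral form stated in the theorem, one groups the lattice points by their $\ell^1$-norm, bounds $N_j$ using the generating function $\sum_j N_j z^j=((1+z)/(1-z))^n$ (which yields polynomial-in-$j$ estimates carrying the $2^{n-3}$ and Hurwitz-$\zeta$ combinatorial factor), and finally uses the monotonicity of $t\mapsto t^{2\tau}\ee^{-4\pi\delta t}$ past its maximum to pass from the sum to the integral $\int_{4\pi\delta(L+1)}^\infty u^{2\tau}\ee^{-u}\,\dif u$ via the substitution $u=4\pi\delta t$. Combining this tail bound with the untouched finite sum and taking square roots gives the stated constant $c_R(\delta)$. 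The main obstacle is precisely this last step: tracking the exact combinatorial constants $2^{n-3}\zeta(2,2^\tau)(2\pi)^{-2\tau}$ through the counting of lattice points and the sum-to-integral comparison, where extra care is needed so that the lower limit of integration is $4\pi\delta(L+1)$ rather than $4\pi\delta L$.
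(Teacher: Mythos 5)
Your setup---existence and uniqueness of the zero-average solution, the term-by-term bound on $|u(\theta)|$, the Cauchy--Schwarz separation of amplitude and divisor data, and the Bessel-type bound $\sum_k|\hat v_k|^2\ee^{4\pi|k|_1\rho}\le 2^n\snorm{v}_\rho^2$ obtained by averaging the Parseval identity over the $2^n$ corners $\eta=(\pm\rho,\dots,\pm\rho)$---coincides with the paper's argument (inequalities \eqref{eq:normF}, \eqref{eq:Cauchy-Schwarz}, \eqref{eq:Bessel}), and your splitting of the divisor sum at $|k|_1=L$ is exactly \eqref{eq:summa}. Up to that point the proposal is correct.

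The gap is in the tail estimate, and it is not merely a matter of ``tracking the exact combinatorial constants'' as you suggest. You bound each divisor individually via the Diophantine condition, $4|\sin(\pi k\cdot\omega)|^2\ge 16\gamma^2|k|_1^{-2\tau}$, and then count lattice points on the shells $|k|_1=j$. Since $N_j=\#\{k\in\ZZ^n:|k|_1=j\}\asymp j^{n-1}$, the resulting bound
\[
\frac{1}{16\gamma^2}\sum_{j>L}N_j\,j^{2\tau}\ee^{-4\pi j\delta}
\]
is of order $\gamma^{-2}\delta^{-(2\tau+n)}$ as $\delta\to 0$, whereas the tail term in \eqref{eq:russmann:mejor}, once divided by $\gamma^2\delta^{2\tau}$, is of order $\gamma^{-2}\delta^{-2\tau}$ uniformly in $\delta$ (the integral is bounded by $\Gamma(2\tau+1)$). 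No choice of constants can absorb a discrepancy of order $\delta^{-n}$: your route reproduces precisely the ``pessimistic factor $\gamma\delta^{n+\tau}$'' that Remark~\ref{rem:polys} states the theorem is designed to avoid. The missing ingredient is R\"ussmann's counting lemma, which is what the paper means by ``the standard R\"ussmann argument'': for $k\neq k'$ with $|k|_1,|k'|_1\le N$, applying \eqref{eq:Diophantine} to $k-k'$ shows that the points $k\cdot\omega\bmod 1$ are pairwise separated by at least $\gamma(2N)^{-\tau}$, so that when the quantities $\mathrm{dist}(k\cdot\omega,\ZZ)$ with $0<|k|_1\le N$ are sorted increasingly, the $\nu$-th smallest is bounded below by a quantity proportional to $\nu\gamma N^{-\tau}$. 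Summing the reciprocals squared along this ordering (this is where the Hurwitz factor $\zeta(2,2^\tau)$ originates) yields $\sum_{0<|k|_1\le N}|\sin(\pi k\cdot\omega)|^{-2}=\cO(\gamma^{-2}N^{2\tau})$ with no dimension-dependent power of $N$; an Abel summation in $j$ against the weights $\ee^{-4\pi j\delta}$ then produces the integral $\int_{4\pi\delta(L+1)}^\infty u^{2\tau}\ee^{-u}\,\dif u$ with the stated prefactor. Without this separation argument the claimed form of $c_R(\delta)$ cannot be reached.
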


\begin{proof}[Sketch of the proof]
The proof of this result, fully discussed in~\cite{FiguerasHL}, 
follows using the same arguments originated
in~\cite{Russmann75,Russmann76a}, but with an eye in the feasibility of
computing rigorous upper bounds of finite sums. For the sake of completeness,
we discuss the sketch of the proof in order to point out several inequalities
which will be analyzed later. We omit the details that are not related with
the numerical explorations presented in this paper.


We note that the divisors in the expression~\eqref{eq:small:formal} are written
as
\[
| 1-\mathrm{e}^{2\pi \mathrm{i} k \cdot \omega}  | =  2 |\sin(\pi
k \cdot \omega)|.
\]
Then, the norm of $\R v$ is controlled as
\begin{equation}\label{eq:normF}
\|\R v\|_{\rho-\delta} \leq \sum_{k \in
\ZZ^n\backslash \{0\}} \frac{|\hat v_k| \mathrm{e}^{2 \pi |k|_1
(\rho-\delta)}}{2 |\sin(\pi k \cdot \omega)|},
\end{equation}
and using Cauchy-Schwarz inequality we obtain
\begin{equation}\label{eq:Cauchy-Schwarz}
\|\R v\|_{\rho-\delta} \leq \bigg( \sum_{k
\in \ZZ^n\backslash \{0\}} |\hat v_k|^2 \mathrm{e}^{4\pi |k|_1\rho}
\bigg)^{1/2} \bigg( \sum_{k \in \ZZ^n\backslash \{0\}} \frac{\mathrm{e}^{-4 \pi
|k|_1\delta}}{4|\sin(\pi k \cdot \omega)|^2 } \bigg)^{1/2}.
\end{equation}

On the one hand, the first term 
in \eqref{eq:Cauchy-Schwarz}
is bounded by
\begin{equation}\label{eq:Bessel}
\sum_{k \in \ZZ^n\backslash \{0\}} |\hat v_k|^2 \mathrm{e}^{4 \pi |k|_1\rho} \leq 
\sum_{k \in \ZZ^n} |\hat v_k|^2 \mathrm{e}^{4 \pi |k|_1\rho} \leq
2^n \|v\|^2_\rho
\end{equation}
(see details in~\cite{Russmann75}), and on the other hand,
we split the second term as
\begin{equation}\label{eq:summa}
\sum_{k \in \ZZ^n\backslash \{0\}} \frac{\mathrm{e}^{-4 \pi |k|_1\delta} }{4
|\sin(\pi k \cdot \omega)|^2} = \sum_{0<|k|_1\leq L} \frac{\mathrm{e}^{-4 \pi
|k|_1\delta}}{4 |\sin(\pi k \cdot \omega)|^2} + \sum_{|k|_1 > L}
\frac{\mathrm{e}^{-4 \pi |k|_1\delta}}{4 |\sin(\pi k \cdot \omega)|^2}.
\end{equation}
The finite sum of the elements $0<|k|_1\leq L$ can be evaluated
for the selected frequency vector $\omega$. Then, the tail is controlled
using the standard R\"ussmann argument.
\end{proof}

We highlight the following cases:
\begin{itemize}
\item \emph{Classic R\"ussmann estimates}: 
In this case, $c_R(\delta)$ is uniformly estimated by a
constant $c_R^0$ independent of $\delta$. Indeed, 
for $L=0$ we have
\begin{align}
c_R(\delta) = {} &
\bigg(
2^{n-3} \zeta(2,2^\tau) (2\pi)^{-2\tau}
\int_{4\pi\delta}^\infty u^{2\tau} e^{-u}\ \dif u
\bigg)^{1/2} \\
\leq {} &
\bigg(
2^{n-3} \zeta(2,2^\tau) (2\pi)^{-2\tau} \Gamma(2\tau +1) 
\bigg)^{1/2}
=: c_R^0\,.
\label{eq:classic:russmann}
\end{align}
This is the
classic R\"ussmann constant (see~\cite{Russmann75,Russmann76a}).

\item \emph{Ad hoc R\"ussmann estimates}: 
Using a computer it is standard to obtain a sharp rigorous upper
bound of the expression for $c_R(\delta)$ given in
Equation~\eqref{eq:russmann:mejor}.  To this end, we enclose $\omega$ with an
interval vector $\varpi$ (such that $\sin(\pi k \cdot \omega) \neq 0$ for every
$\omega \in \varpi$ and for every $|k|_1 \leq L$), and we rigorously enclose
the finite sum for $0<|k|_1 \leq L$ using interval arithmetics. Moreover, we
consider upper bounds of the integral in the tail using that, if $y>x$,
\[
\int_{y}^\infty u^x e^{-u} \dif u \leq \frac{y}{y-x} y^x e^{-y}.
\]
Applying this last estimate requires to take $L$ such that $2\pi\delta(L+1)>
\tau$.  This approach has been used in~\cite{FiguerasHL}, where we refer to the
reader for additional implementation details.
\end{itemize}

\begin{remark}\label{rem:polys}
The estimates presented above are optimal in the sense of the asymptotic
dependence of the divisor $\gamma \delta^{\tau}$ (former works in the
literature used the much pessimistic factor $\gamma \delta^{n+\tau}$). This
follows by studying functions with a single harmonic $v_k(\theta):=\ee^{2 \pi
\ii k \cdot \theta}$. A direct computation yields the estimate
\begin{equation}\label{eq:R1harm}
\snorm{\R v_k}_{\rho-\delta} \leq \frac{\ee^{2 \pi |k|_1 (\rho-\delta)}}{2 |\sin(\pi k \cdot \omega)|} 
\leq \frac{1}{4 \gamma} |k|_1^\tau \ee^{-2\pi |k|_1 \delta} \snorm{v_k}_\rho 
\leq \frac{1}{4 \gamma} \left(\frac{\tau}{2\pi \ee \delta} \right)^\tau \snorm{v_k}_\rho \,.
\end{equation}
This computation also shows that R\"ussmann estimates may provide
a large overestimation for a fixed function in the above family. As R\"ussmann himself
observed in \cite{Russmann75}, the estimate in~\eqref{eq:R1harm} seems to be the best possible estimate
in the class of classic estimates, and he was able to obtain such an estimate for the case $n=1$
combining his approach with the theory of continued fractions.
As far as we know, this question still remains open for $n>1$.
\end{remark}

Motivated by the above discussion, we introduce a functional
$F_{\rho,\delta,\omega}$ acting on $v$ as follows
\begin{equation}\label{eq:operF}
F_{\rho,\delta,\omega} v := \frac{\snorm{\R v}_{\rho-\delta}}{\snorm{v}_\rho} \leq
\snorm{\R}_{\rho,\rho-\delta}\,,
\end{equation}
where $\snorm{\R}_{\rho,\rho-\delta}$ is the norm of the operator $\R$ acting between the
corresponding spaces.
Then, it turns out that the R\"ussmann estimates provide upper bounds for the
image of the previous functional:
\begin{equation}\label{eq:inequalities}
F_{\rho,\delta,\omega} v 
\leq \frac{c_R(\delta)}{\gamma \delta^\tau}
\leq \frac{c_R^0}{\gamma \delta^\tau}
\,.
\end{equation}

In Section~\ref{sec:numerical1} we are going to study numerically the sharpness of these two inequalities as a
function of $\rho$, $\delta$, $\omega$, and $v$. 

\subsection{Analyzing the different sources of
overestimation}\label{ssec:sources}

Assume that we take $L$ large enough in such a way that the
contribution of the tail in Equation~\eqref{eq:russmann:mejor} can be
neglected. In this case, it is interesting to analyze the different sources of
the overestimation given by Theorem~\ref{lem-Russmann}.  

We denote
\begin{equation}\label{eq:def:ineq1}
1 \leq I_R := \frac{c_R(\delta)}{\gamma \delta^\tau} \frac{1}{F_{\rho,\delta,\omega}v} 
= \bigg(2^{n-2} \sum_{k \in
\ZZ^n\backslash \{0\}} \frac{\mathrm{e}^{-4 \pi
|k|_1\delta}}{|\sin (\pi k \cdot \omega)|^2 } \bigg)^{1/2} \frac{\snorm{v}_\rho}{
\snorm{\R v}_{\rho-\delta}}
\,,
\end{equation}
which stands for the overestimation produced by the \emph{ad hoc} R\"ussmann estimates.

It is clear that the expression of $I_R$ breaks down in terms of three
different sources of overestimation as $I_R =I_1 I_2 I_3$, where
\begin{align}
I_1 := {} & 
\frac{\sum_{k \in
\ZZ^n\backslash \{0\}}\frac{|\hat v_k| \mathrm{e}^{2 \pi |k|_1
(\rho-\delta)}}{2 |\sin(\pi k \cdot \omega)|}}{
\snorm{\R v}_{\rho-\delta}} \,, \label{eq:c1} \\
I_2 := {} &
\frac{
\sqrt{
\left(\sum_{k \in \ZZ^n\backslash \{0\}}
|\hat v_k|^2 \mathrm{e}^{4 \pi |k|_1\rho}\right) \left( \sum_{k \in
\ZZ^n\backslash \{0\}} \frac{\mathrm{e}^{-4 \pi
|k|_1\delta}}{4|\sin (\pi k \cdot \omega)|^2 } \right)}}{
\sum_{k \in
\ZZ^n\backslash \{0\}}\frac{|\hat v_k| \mathrm{e}^{2 \pi |k|_1
(\rho-\delta)}}{2 |\sin(\pi k \cdot \omega)| } }
\,, \label{eq:c2} \\
I_3 := {} & \frac{2^{n/2}  \snorm{v}_\rho}{
\sqrt{\sum_{k \in \ZZ^n\backslash \{0\}}
|\hat v_k|^2 \mathrm{e}^{4 \pi |k|_1\rho}}}\,. \label{eq:c3}
\end{align}

Notice that $I_1, I_2, I_3$ correspond to 
inequalities \eqref{eq:normF}, \eqref{eq:Cauchy-Schwarz} and 
\eqref{eq:Bessel} respectively. Since 
\[
\frac{\log(I_1)}{\log(I_R)}
+
\frac{\log(I_2)}{\log(I_R)}
+
\frac{\log(I_3)}{\log(I_R)}
=1\,,
\]
we can make use of a color chart to represent the contribution (fraction in
logarithmic scale) to each element to the total overestimation
(see Figure~\ref{fig:F1:golden_v0:error} for an example).

Moreover, we observe that the expression in~\eqref{eq:def:ineq1} also breaks
down in two factors. On the one hand, the factor
\[
\frac{c_R(\delta)}{\gamma \delta^\tau} 
 = \bigg(2^{n-2} \sum_{k \in
\ZZ^n\backslash \{0\}} \frac{\mathrm{e}^{-4 \pi
|k|_1\delta}}{|\sin (\pi k \cdot \omega)|^2 } \bigg)^{1/2}
\]
is independent of $v$ and is decreasing with respect to $\delta$, and on the
other hand, the factor
\begin{equation}\label{eq:factor2}
\frac{1}{F_{\rho,\delta,\omega}v}
 = \frac{\snorm{v}_\rho}{\snorm{\R v}_{\rho-\delta}}
\end{equation}
is increasing with respect to $\delta$. 

\subsection{Using discrete Fourier transform to approximate analytic functions}\label{ssec:DFT}

Here, we briefly recall some
explicit estimates presented in~\cite{FiguerasHL} that allows us to control the
norm of an analytic function in terms of the norm of its discrete Fourier
transform.  Using this estimates we are able to rigorously validate the
numerical computations using a finite number of Fourier
coefficients.

We consider a sample of points on the regular grid of size
$\NF=(N_{1},\ldots,N_{n}) \in \NN^n$
\begin{equation}\label{eq:sample:torus}
\theta_j:=(\theta_{j_1},\ldots,\theta_{j_n})=
\left(\frac{j_1}{N_{1}},\ldots,
\frac{j_n}{N_{n}}\right),
\end{equation}
where $j= (j_1,\ldots,j_n)$, with $0\leq j_\ell < N_{\ell}$ and $1\leq \ell
\leq n$. This defines an $n$-dimensional sampling $\{u_j\}$, with
$u_j=u(\theta_j)$. 
The integrals in Equation~\eqref{eq:fourier:coef} are approximated
using the 
discrete Fourier transform:
\[
\tilde u_k= \frac{1}{N_{1} \cdots
N_{n}} \sum_{0\leq j < \NF} u_j \mathrm{e}^{-2\pi
\mathrm{i} k \cdot \theta_j},
\]
where the sum runs over integer subindices $j \in \ZZ^n$ such that $0\leq
j_\ell < N_{\ell}$ for $\ell= 1,\dots, n$.  Notice that $\tilde u_k$ is
periodic with respect to the components $k_1,\dots, k_n$ of $k$, with
periods $N_{1}, \dots, N_{n}$, respectively.
The periodic function $u$ is approximated by the discrete Fourier approximation
\begin{equation}\label{eq:four:approx}
\tilde u(\theta)= \sum_{k \in \INF} \tilde u_k \ee^{2 \pi \mathrm{i} k \cdot
\theta},
\end{equation}
where
\begin{equation}\label{eq:finite}
 \INF= \bigg\{ k \in \ZZ^n \,|\, -\frac{N_{\ell}}{2} \leq k_\ell <
 \frac{N_{\ell}}{2}, 1\leq \ell \leq n \bigg\}.
\end{equation}
is a finite set of multi-indices. 

\begin{theorem}\label{theo-DFT}
\label{ADFT} Let $v:\TT^{n}_{\trho} \to \CC$ be an analytic
function for $\trho>0$ and continuous up to the boundary.
Let $\tilde
v$ be the discrete Fourier approximation of $u$ in the regular grid of size
$\NF= (N_{1},\dots, N_{n}) \in \NN^n$. Then
\begin{equation}\label{eq:DFT}
\snorm{\tilde v-v}_\rho \leq C_{\NF}(\rho, \trho) \snorm{v}_{\trho},
\end{equation}
for $0\leq \rho < \trho$,
where 
\[
C_{\NF}(\rho, \trho)= S_\NF^{*1}(\rho,\trho) + S_\NF^{*2}(\rho,\trho)  +
T_\NF(\rho,\trho)
\]
is given by
\[
S_\NF^{*1}(\rho,\trho) = 
\prod_{\ell= 1}^n \frac{1}{1-\ee^{-2\pi  \trho N_{\ell} }}
\sum_{\begin{array}{c} \sigma\in \{-1,1\}^n \\ \sigma\neq (1,\dots,1)
\end{array}}
\prod_{\ell= 1}^n \ee^{(\sigma_\ell-1)\pi\trho N_{\ell}}
\nuNi{\ell}(\sigma_\ell\trho-\rho),
\]
\[
S_\NF^{*2}(\rho,\trho) = \prod_{\ell= 1}^n \frac{1}{1-\ee^{-2\pi
\trho N_{\ell} }}  \left(1- \prod_{\ell= 1}^n  \left(1-\ee^{-2\pi
\trho N_{\ell} }\right)\right) \prod_{\ell= 1}^n \nuNi{\ell}(\trho-\rho)
\]
and
\[
T_\NF(\rho,\trho)= \left(  \frac{\ee^{2\pi (\trho-\rho)} + 1}{\ee^{2\pi
(\trho-\rho)} -1} \right)^n \ \left( 1 - \prod_{\ell= 1}^n \left(1-
\muNi{\ell}(\trho-\rho)\ e^{-\pi(\trho-\rho) N_{\ell}} \right) \right),
\]
with
\[
\nuNi{\ell}(\delta)= \frac{\ee^{2\pi \delta} + 1 }{\ee^{2\pi \delta} -1}
\left(1- \muNi{\ell}(\delta) \ \ee^{-\pi \delta N_{\ell}}\right) \qquad
\mbox{and} \qquad \muNi{\ell}(\delta) = 
\begin{cases}  
\ 1 &\mbox{if $N_{\ell}$ is even} \\ \displaystyle \frac{2
\ee^{\pi\delta}}{\ee^{2\pi\delta}+1} &\mbox{if $N_{\ell}$ is odd}
\end{cases}.
\]
Notice that $C_{\NF}(\rho, \trho)$ satisfies
$C_{\NF}(\rho, \trho)=\cO(\ee^{-\pi (\trho-\rho) \min_\ell N_\ell})$.
\end{theorem}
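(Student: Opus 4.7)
The plan is to base the argument on the \emph{aliasing identity} for the discrete Fourier coefficients. Using orthogonality of the characters $\{\ee^{2\pi\ii k\cdot\theta_j}\}_j$ on the grid, one obtains for every $k\in\INF$
\begin{equation*}
\tilde v_k = \sum_{m\in\ZZ^n} \hat v_{k+\NF\cdot m},\qquad \NF\cdot m:=(N_1 m_1,\ldots,N_n m_n).
\end{equation*}
Subtracting $\hat v_k$ from both sides and combining the resulting aliasing contribution with the truncation error outside $\INF$, the pointwise error decomposes as
\begin{equation*}
\tilde v(\theta)-v(\theta) \;=\; \sum_{k\in\INF}\,\sum_{m\in\ZZ^n\setminus\{0\}}\hat v_{k+\NF\cdot m}\,\ee^{2\pi\ii k\cdot\theta} \;-\; \sum_{k\notin\INF}\hat v_k\,\ee^{2\pi\ii k\cdot\theta}.
\end{equation*}

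The only analytic input needed is the classical Cauchy-type estimate $|\hat v_k|\leq\snorm{v}_{\trho}\,\ee^{-2\pi|k|_1\trho}$, obtained by shifting each coordinate in \eqref{eq:fourier:coef} by $-\mathrm{sgn}(k_\ell)\,\trho\,\ii$ (with no shift when $k_\ell=0$). Combined with $|\ee^{2\pi\ii k\cdot\theta}|\leq\ee^{2\pi|k|_1\rho}$ for $\theta\in\TT^n_\rho$, this reduces the theorem to controlling two sums over $\ZZ^n$ with tensor-product structure in the coordinates, one producing the truncation part $T_\NF$ and the other producing the aliasing parts $S_\NF^{*1}$ and $S_\NF^{*2}$.

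For the truncation part, the factorisation $\ee^{-2\pi|k|_1(\trho-\rho)}=\prod_\ell\ee^{-2\pi|k_\ell|(\trho-\rho)}$ yields
\begin{equation*}
\sum_{k\notin\INF}\ee^{-2\pi|k|_1(\trho-\rho)} = \prod_{\ell=1}^n A_\ell - \prod_{\ell=1}^n B_\ell,
\end{equation*}
where $A_\ell=\sum_{k_\ell\in\ZZ}\ee^{-2\pi|k_\ell|(\trho-\rho)}$ and $B_\ell$ is the same sum restricted to $-N_\ell/2\leq k_\ell<N_\ell/2$. Elementary geometric series give $A_\ell=(\ee^{2\pi(\trho-\rho)}+1)/(\ee^{2\pi(\trho-\rho)}-1)$ and $B_\ell=A_\ell\bigl(1-\muNi{\ell}(\trho-\rho)\,\ee^{-\pi(\trho-\rho)N_\ell}\bigr)$, reproducing exactly $T_\NF(\rho,\trho)$; the even/odd dichotomy in $\muNi{\ell}$ reflects whether the boundary index $-N_\ell/2$ is itself an integer. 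For the aliasing part the same coordinate factorisation is now applied jointly to the grid index $k_\ell$ (with $-N_\ell/2\leq k_\ell<N_\ell/2$) and the shift $m_\ell\in\ZZ$, organised by (i) which components of $m$ vanish and (ii) the sign pattern $\sigma_\ell\in\{-1,+1\}$ of $k_\ell+N_\ell m_\ell$ in the remaining coordinates. The case $m_\ell\neq 0$ for every $\ell$ yields, after summing the geometric series in each $m_\ell$, the sum over $\sigma\neq(1,\ldots,1)$ defining $S_\NF^{*1}$; the complementary case, where some $m_\ell$ vanish, produces by inclusion--exclusion the factor $1-\prod_\ell(1-\ee^{-2\pi\trho N_\ell})$ appearing in $S_\NF^{*2}$, while the common prefactor $\prod_\ell 1/(1-\ee^{-2\pi\trho N_\ell})$ comes from summing the geometric progressions in $m_\ell$.

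The main obstacle is precisely this case analysis for the aliasing sum: the common ratios of the geometric series differ according to the sign of $k_\ell+N_\ell m_\ell$ and according to whether $m_\ell$ vanishes, and the resulting arithmetic must be packaged so as to match exactly the announced formulas for $S_\NF^{*1}$ and $S_\NF^{*2}$. Once this bookkeeping is completed, the asymptotic $C_\NF(\rho,\trho)=\cO(\ee^{-\pi(\trho-\rho)\min_\ell N_\ell})$ is immediate, since the slowest-decaying factor $\ee^{-\pi(\trho-\rho)N_\ell}$ dominates all three contributions.
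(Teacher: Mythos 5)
Your strategy (aliasing identity for $\tilde v_k$, Cauchy estimates $|\hat v_k|\leq \snorm{v}_{\trho}\ee^{-2\pi|k|_1\trho}$, and coordinate-wise geometric series) is exactly the one used in the proof the paper defers to~\cite{FiguerasHL}, and your derivation of $T_\NF$ is complete and correct. For the aliasing part, the bookkeeping you flag as the remaining obstacle does close up, though not quite along the case split you announce: writing $G_\ell=\sum_{k_\ell}\sum_{m_\ell\in\ZZ}\ee^{-2\pi|k_\ell+N_\ell m_\ell|\trho+2\pi|k_\ell|\rho}=\bigl(\nu_\ell(\trho-\rho)+\ee^{-2\pi\trho N_\ell}\nu_\ell(-\trho-\rho)\bigr)/\bigl(1-\ee^{-2\pi\trho N_\ell}\bigr)$, the whole aliasing sum equals $\prod_\ell G_\ell-\prod_\ell\nu_\ell(\trho-\rho)$, which coincides with $S^{*1}_\NF+S^{*2}_\NF$ after an algebraic regrouping, rather than the cases ``all $m_\ell\neq0$'' and ``some $m_\ell=0$'' producing $S^{*1}_\NF$ and $S^{*2}_\NF$ separately (the $m_\ell=0$ and $m_\ell\neq0$ contributions mix within each $\sigma$-term). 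This is a packaging issue only, not a gap in the argument.
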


We refer the reader to~\cite{FiguerasHL} for the proof and implementation
details. By combining Theorem~\ref{lem-Russmann} and Theorem~\ref{theo-DFT} we
obtain a direct way to control $F_{\rho,\delta, \omega}v$ for a given function
$v$. 

\begin{corollary}\label{coro}
Let $\omega\in \RR^n$ be a $(\gamma,\tau)$-Diophantine frequency vector, for
certain $\gamma>0$ and $\tau\geq n$.
Let $v: \TT_\hrho^n \rightarrow \CC$ be an analytic function for $\hrho>0$.
Then, for any $0<\delta \leq \rho< \tilde \rho< \hrho$ we have
the following interval enclosure
\[
F_{\rho,\delta,\omega} v =
\frac{\snorm{\R v}_{\rho-\delta}}{\snorm{v}_\rho} \in 
\left[
\frac
{\snorm{\R \tilde v}_{\rho-\delta} - \frac{c_R(\delta) C_{\NF}(\rho,
\tilde \rho)}{\gamma\delta^{\tau}} \snorm{v}_{\tilde \rho}}
{\snorm{\tilde v}_{\rho} + C_{\NF}(\rho,
\tilde \rho) \snorm{v}_{\tilde \rho}}
,
\frac{\snorm{\R \tilde v}_{\rho-\delta} + \frac{c_R(\delta) C_{\NF}(\rho,
\tilde \rho)}{\gamma\delta^{\tau}} \snorm{v}_{\tilde \rho}}
{\snorm{\tilde v}_{\rho} - C_{\NF}(\rho,\tilde \rho)
\snorm{v}_{\tilde \rho}}
,
\right] \,,
\]
where $c_R(\delta)$ is given by Equation~\eqref{eq:russmann:mejor}.
\end{corollary}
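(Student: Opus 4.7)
The plan is to express both the numerator $\snorm{\R v}_{\rho-\delta}$ and denominator $\snorm{v}_{\rho}$ of $F_{\rho,\delta,\omega}v$ in terms of the computable discrete Fourier approximation $\tilde v$, treating $v-\tilde v$ as a perturbation whose size is controlled by Theorem~\ref{theo-DFT}, and whose image under $\R$ is controlled by Theorem~\ref{lem-Russmann}. Concretely, I would write $v = \tilde v + (v-\tilde v)$, and by the triangle inequality deduce
\[
\snorm{\tilde v}_{\rho} - \snorm{v-\tilde v}_{\rho} \;\leq\; \snorm{v}_{\rho} \;\leq\; \snorm{\tilde v}_{\rho} + \snorm{v-\tilde v}_{\rho},
\]
and similarly, using linearity of $\R$,
\[
\snorm{\R \tilde v}_{\rho-\delta} - \snorm{\R(v-\tilde v)}_{\rho-\delta} \;\leq\; \snorm{\R v}_{\rho-\delta} \;\leq\; \snorm{\R \tilde v}_{\rho-\delta} + \snorm{\R(v-\tilde v)}_{\rho-\delta}.
\]

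Next, the DFT error bound of Theorem~\ref{theo-DFT} applied on the strip of width $\rho<\tilde\rho<\hrho$ gives
\[
\snorm{v-\tilde v}_{\rho} \;\leq\; C_{\NF}(\rho,\tilde\rho)\,\snorm{v}_{\tilde\rho},
\]
which directly yields the bracketing of $\snorm{v}_{\rho}$ by $\snorm{\tilde v}_{\rho}\pm C_{\NF}(\rho,\tilde\rho)\snorm{v}_{\tilde\rho}$ that appears in the denominator of the stated interval. For the numerator, I would apply Theorem~\ref{lem-Russmann} to the function $v-\tilde v$ (noting that $\R$ ignores the zeroth Fourier coefficient, so it acts on arbitrary functions by killing the average) and chain it with the DFT bound to get
\[
\snorm{\R(v-\tilde v)}_{\rho-\delta} \;\leq\; \frac{c_R(\delta)}{\gamma\delta^{\tau}}\,\snorm{v-\tilde v}_{\rho} \;\leq\; \frac{c_R(\delta)\,C_{\NF}(\rho,\tilde\rho)}{\gamma\delta^{\tau}}\,\snorm{v}_{\tilde\rho}.
\]
Substituting these two-sided bounds into $F_{\rho,\delta,\omega}v=\snorm{\R v}_{\rho-\delta}/\snorm{v}_{\rho}$ and pairing the smallest admissible numerator with the largest denominator (and vice versa) produces exactly the interval enclosure of the statement.

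The only subtle point is ensuring the lower endpoint is meaningful: the upper-bound denominator is always positive, but the lower-bound formula requires $\snorm{\tilde v}_{\rho} > C_{\NF}(\rho,\tilde\rho)\snorm{v}_{\tilde\rho}$ (and the numerator's positivity to be interesting), which in practice holds because $C_{\NF}$ decays exponentially in $\min_\ell N_\ell$ per the last statement of Theorem~\ref{theo-DFT}. Thus the main work is purely bookkeeping of triangle inequalities; the only thing worth flagging is that $\R$ is applied to $v-\tilde v$ (not to $v$ or $\tilde v$ separately in a way that would require zero-average hypotheses on each), so the estimate from Theorem~\ref{lem-Russmann} applies unchanged. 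I do not foresee a genuine obstacle: the proof is a clean combination of the two preceding theorems with a perturbative splitting.
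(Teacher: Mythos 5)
Your proposal is correct and is precisely the argument the paper intends: the corollary is stated as a direct combination of Theorem~\ref{lem-Russmann} and Theorem~\ref{theo-DFT}, and your perturbative splitting $v=\tilde v+(v-\tilde v)$ with the triangle inequality, the DFT error bound, and the R\"ussmann bound applied to $v-\tilde v$ reproduces exactly the stated endpoints. Your side remarks (that $\R$ acts by discarding the average, and that the lower endpoint is only meaningful when $C_{\NF}$ is small enough) are appropriate and do not affect the validity of the enclosure.
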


\section{On the numerical computation of the analytic norm}\label{sec:norm}

Let us describe the method used in this paper to numerically compute the
analytic norm of a periodic function of the form
\begin{equation}\label{eq:FAM1}
v_{s,\{a_k\}} (\theta)= \sum_{k \in \ZZ^n \backslash \{0\}} \hat v_k \ee^{2\pi \ii k \cdot \theta}\,,
\qquad
\hat v_k = a_k \frac{\ee^{-2\pi |k|_1 \hat \rho}}{|k|_1^s}\,,
\end{equation}
where $\hat \rho>0$, and $a_k \in \bar \DD$.  We will denote $v_s$
if $a_k=1$ for every $k\in \ZZ^n \backslash \{0\}$. We will also denote
$v_{s,+}$ if $a_k=1$ for every $k\in \ZZ^n \backslash \{0\}$ such that $k_i\geq
0$, and $a_k=0$ otherwise. These functions are analytic in the complex strip
$\TT^n_{\hat \rho}$. Moreover, the Fourier coeficients of $v_{s,\{a_k\}}$ decay
as $|a_k| |k|^{-s}$ at the boundary of $\TT^n_{\hat \rho}$.

The idea consists in approximating the Fourier series of the considered
function using the support $\INF$, given in~\eqref{eq:four:approx}. Given a
function $v:=v_{s,\{a_k\}}$, defined for certain $\hrho>0$, we choose a number
$\rho < \hrho$, and we select a discretization $\INF$ in order to
guarantee a precision $\eps$ in the numerical approximation of the norm
$\snorm{v}_\rho$. Notice that this discretization will keep the required
tolerance when computing $\snorm{v}_{\rho-\delta}$, for every $0<\delta \leq
\rho$.

\begin{remark}
Notice that one must be very careful with round-off errors. The norm
$\snorm{v}_\rho$ is obtained by computing the maximum of the function $v$
restricted to the boundary $\partial \TT^n_\rho$. To this end, we must multiply
the Fourier coefficients $\hat v_k$ by exponentials of the form $\ee^{\pm 2 \pi
(k_1 \pm \cdots \pm k_n)\rho}$. Since the coefficients $\hat v_k$ decay as
$\ee^{-2\pi |k|_1 \hrho}$, then the mentioned multiplications produce very
large round-off errors, specially if $\rho$ is close to $\hrho$. For this
reason, the factors $\ee^{-2\pi |k|_1 \hrho}$ are written separately in the
definitions of~\eqref{eq:FAM1}.
\end{remark}

In particular, we take a uniform grid $N=(M,\ldots,M)$ where $M=2^q$ is the
smallest power such that
\[
\frac{\ee^{-\pi (\hrho-\rho) M /2}}{(M/4)^s} < \eps\,.
\]
Then, we evaluate the function $v=v_{s,\{a_k\}}$ on the boundary $\partial \TT^n_\rho$ 
thus obtaining $2^n$ samples of $\# \INF$ elements, 
perform the backward fast Fourier transform associated to these
$2^n$ samples, and select the maximum value from the obtained
numbers. This value is a good approximation, with error $\cO(1/M)$, of the maximum of the
function at the boundary. Finally, we refine the computation of this maximum using the Newton
method, thus obtaining $\snorm{\tilde v}_{\rho}$ the norm associated
with the discrete Fourier transform of $v$ in $\INF$. Then, the
true norm is enclosed in the interval (see Theorem~\ref{theo-DFT})
\[
\snorm{v}_\rho 
\in
\Big[
\snorm{\tilde v}_\rho 
-
C_\NF(\rho,\trho) \snorm{v}_\trho
~~,~~
\snorm{\tilde v}_\rho 
+
C_\NF(\rho,\trho) \snorm{v}_\trho
\Big]
\]
for $\rho < \trho < \hrho$. In the following computations, we select the number
$\trho$ that minimizes the length of such interval. The norm
$\snorm{v}_\trho$ is simply overestimated analytically.

To illustrate the above methodology, we consider the function $v_s$, given by~\eqref{eq:FAM1} with $a_k=1$.
In addition to the fact that we have an accurate control of the decay of the Fourier coefficients, this
family has the property that the norm $\snorm{v_s}_\rho$ can be evaluated
explicitly. Our aim is to convince the reader that, even though we
consider test functions with infinitely many harmonics, we can choose a
suitable discretization providing a good description of the norm with a
finite amount of computations. This may seem obvious at a first glance, but we
have to take into account the effect of the infinitely many small divisors
(see Theorem~\ref{theo-DFT} and Corollary~\ref{coro}).
All computations have been performed with $30$ digits 
(using the \texttt{MPFR} library~\cite{RevolR05}).

We first consider the case $n=1$. 
In the following computations we use the simple observation that a periodic
function having real Fourier coefficients all with the same sign attains its
maximum at the point $\theta=0$.
Then, we compute $\snorm{v_0}_\rho$ for $0\leq \rho < \hrho$, as follows:
\[
\snorm{v_0}_\rho = \sum_{k>0} \ee^{-2\pi k (\hrho-\rho)}
+
\sum_{k>0} \ee^{-2\pi k (\hrho+\rho)}
=
\frac{\cosh(2\pi \rho)-\ee^{-2\pi \hrho}}{\cosh(2\pi
\hrho)-\cosh(2\pi \rho)}.
\]
Notice that $\snorm{v_0}_\rho \rightarrow \infty$ when $\rho \rightarrow
\hrho$. In order to consider the case $\rho=\hrho$ we assume that $s>1$
and observe that
\[
\snorm{v_s}_\rho 
=
\sum_{k>0} 
\frac{\ee^{-2\pi k (\hrho-\rho)}}{k^s}
+
\sum_{k>0} 
\frac{\ee^{-2\pi k (\hrho+\rho)}}{k^s} 
=
\mathrm{Li}_s(\ee^{-2\pi(\hrho-\rho)})
+
\mathrm{Li}_s(\ee^{-2\pi(\hrho+\rho)})\,,
\]
where $\mathrm{Li}_s(z)$ is the polylogarithm function.
In Table~\ref{tab:norm:1d} we present some computations of the norm
$\snorm{v_0}_\rho$ using the method described. We compare with the exact value
of this norm and with the constant $C_\NF(\rho,\trho)$ that can be used to
rigorously enclose the numerical computations. We ask for a tolerance
$\eps=10^{-30}$. The computations saturate the precision of the machine. 

\begin{table}[!h]
\centering
{\scriptsize
\begin{tabular}{|c|c c c c|}
\hline
$\rho$ & $N$ & $\snorm{\tilde v_0}_\rho$ & $\frac{\snorm{\tilde
v_0}_\rho-\snorm{v_0}_\rho}{\snorm{v_0}_\rho}$ & $C_\NF(\rho,\trho)$ \\
\hline
0.0 & 64 & 3.7418731973e-03 &     0.0e+00 & 9.6e-88 \\
0.1 & 64 & 4.5099874933e-03 &    -2.0e-30 & 5.1e-79 \\
0.2 & 64 & 7.1365311745e-03 &    -4.3e-31 & 2.8e-70 \\
0.3 & 64 & 1.2735869952e-02 &     0.0e+00 & 1.5e-61 \\
0.4 & 128 & 2.3749435599e-02 &    0.0e+00 & 3.3e-105\\
0.5 & 128 & 4.5246411394e-02 &    0.0e+00 & 1.0e-87 \\
0.6 & 128 & 8.8185405159e-02 &   -1.1e-30 & 3.0e-70 \\
0.7 & 256 & 1.7903995865e-01 &    0.0e+00 & 3.8e-105\\
0.8 & 256 & 3.9785030192e-01 &   -4.9e-31 & 3.8e-70 \\
0.9 & 512 & 1.1435745379e+00 &   -3.4e-30 & 5.9e-70 \\
0.93 & 1024 & 1.8101817456e+00 &  1.3e-30 & 8.9e-98 \\
0.96 & 2048 & 3.4997999963e+00 &  1.3e-30 & 1.5e-111\\
0.99 & 8192 & 1.5420733666e+01 & -1.1e-29 & 5.5e-111\\
0.995 & 16384 & 3.1333610168e+01 &  4.8e-30 & 1.0e-110\\
0.999 & 65536 & 1.5865547020e+02 & -4.6e-29 & 1.2e-87 \\
0.9999 & 524288 & 1.5910494868e+03 & 1.5e-27 & 9.3e-69 \\
\hline
\end{tabular}
\caption{{\footnotesize 
Numerical computation of the analytic norm of the function $v_0$
with $n=1$ and $\hrho=1$, for several values of $\rho$. 
We show the relative error using
the explicit formula and we include also the minimum of
$C_\NF(\rho,\trho)$ with respect to $\trho$, with $\rho<\trho<\hrho$.
}} \label{tab:norm:1d}}
\end{table}

For the case $n=2$, we restrict to functions $v_{s,+}$, thus simplifying the
combinatorics of the computations. We have
\begin{align*}
\snorm{v_{s,+}}_\rho = {} & \sum_{\ell =1}^\infty \sum_{k_1+k_2 = \ell} 
\frac{\ee^{-2\pi \ell (\hrho-\rho)}}{\ell^s}\\
= {} & \sum_{\ell =1}^\infty (\ell+1)  
\frac{\ee^{-2\pi \ell (\hrho-\rho)}}{\ell^s}
= 
\mathrm{Li}_{s-1}(\ee^{-2\pi (\hrho-\rho)})
+
\mathrm{Li}_{s}(\ee^{-2\pi (\hrho-\rho)}).
\end{align*}
Table~\ref{tab:norm:2d} is analogous to Table \ref{tab:norm:1d} for the
function $v_{0,+}$ with $n=2$. Of course, the memory cost in the second case is
more demanding, so we do not approach the boundary as much as in the
first case.

\begin{table}[!h]
\centering
{\scriptsize
\begin{tabular}{|c|c c c c|}
\hline
$\rho$  & $N$ & $\snorm{\tilde v_{0,+}}_\rho$ & $\frac{\snorm{\tilde
v_{0,+}}_\rho-\snorm{v_{0,+}}_\rho}{\snorm{v_{0,+}}_\rho}$ & $C_\NF(\rho,\trho)$ 
\\
\hline
0.0  & 64$\times$64 & 3.7453736011e-03 &  0.0e+00 & 1.9e-87 \\
0.1  &  64$\times$64 & 7.0378103397e-03 &  4.4e-31 & 1.0e-78 \\
0.2  &  64$\times$64 & 1.3253135843e-02 &  0.0e+00 & 5.6e-70 \\
0.3  &  64$\times$64 & 2.5059548137e-02 & -4.9e-31 & 3.1e-61 \\
0.4  &   128$\times$128 & 4.7753162472e-02 & 5.2e-31 & 7.0e-105 \\ 
0.5  &  128$\times$128 & 9.2371351668e-02 &-5.3e-31 & 2.1e-87 \\ 
0.6  &  128$\times$128 & 1.8405377620e-01 &-5.4e-31 & 7.1e-70 \\ 
0.7  &   256$\times$256 & 3.9008106334e-01 &-1.0e-31 & 1.0e-104 \\ 
0.8  &  256$\times$256 & 9.5395121039e-01 & 1.2e-30 & 1.3e-69 \\ 
0.9  &  512$\times$512 & 3.5948837747e+00 & 0.0e+00 & 3.9e-69 \\ 
0.93 & 1024$\times$1024 & 6.8970910155e+00 & -2.2e-30 & 8.2e-97 \\
0.96 &  2048$\times$2048 & 1.9248159655e+01 & 2.6e-30 & 2.4e-110 \\
\hline
\end{tabular}
\caption{{\footnotesize 
Numerical computation of the analytic norm of the function $v_{0,+}$
with $n=2$ and $\hrho=1$, for several values of $\rho$. 
Implementation details are the same as in 
Table~\ref{tab:norm:1d}.}}
\label{tab:norm:2d}}
\end{table}

In Table~\ref{tab:norm:s} we present some computations of the norm
$\snorm{v_s}_\rho$ at the boundary, i.e. taking $\rho=\hrho$. We use the same
implementation parameters as before. We consider the cases $n=1$ and $n=2$
in the same table.
The numbers illustrate
the dependence of the computational cost on the regularity of the function.
Notice that controlling the tail using $C_N(\rho,\rho)$ does not make sense.
To this end, it is not difficult to extend the arguments in Section~\ref{ssec:DFT}
to consider $\mathcal{C}^r$-functions. In this case, one uses that the
decay of Fourier coefficients of a $\mathcal{C}^r$-function is of the form
$|\hat f_k| \leq (2\pi k)^{-r} \snorm{f}_{\mathcal{C}^r}$. 

\begin{table}[!h]
\centering
{\scriptsize
\begin{tabular}{|c|c c c|c c c|}
\hline
& \multicolumn{3}{c|}{$n=1$} & \multicolumn{3}{c|}{$n=2$} \\
\hline
$s$ & $N$ & $\snorm{\tilde v_s}_\rho$ & $\frac{\snorm{\tilde
v_s}_\rho-\snorm{v_s}_\rho}{\snorm{v_s}_\rho}$
& $N$ & $\snorm{\tilde v_{s,+}}_\rho$ & $\frac{\snorm{\tilde
v_{s,+}}_\rho-\snorm{v_{s,+}}_\rho}{\snorm{v_{s,+}}_\rho}$ \\
\hline
15 & 512 & 1.0000340756e+00 &  0.0e+00 & 512$\times$512 & 2.0000918364e+00 &
-1.5e-30 \\
14 & 1024 & 1.0000647355e+00 & 0.0e+00 & 1024$\times$1024 & 2.0001839615e+00 &
0.0e+00 \\
13 & 1024 & 1.0001262007e+00 & -7.8e-31 & 1024$\times$1024 & 2.0003687999e+00
& -2.5e-29 \\
12 & 2048 & 1.0002495739e+00 & 7.8e-31 & 2048$\times$2048 & 2.0007402752e+00 &
-7.0e-30 \\
11 & 4096 & 1.0004976759e+00 & 7.8e-31 & & & \\
10 & 4096 & 1.0009980625e+00 & -8.8e-29 & & & \\
9 & 16384 & 1.0020118802e+00 & -2.3e-30 & & & \\
8 & 32768 & 1.0040808435e+00 & -5.7e-29 & & & \\
7 & 131072 & 1.0083527647e+00 & -1.3e-28 & & & \\
6 & 524288 & 1.0173465493e+00 & -5.0e-27 & & & \\
\hline
\end{tabular}
\caption{{\footnotesize 
Numerical computation of the analytic norm at $\rho=\hrho$ of the function $v_s$
with $\hrho=1$, for several values of $s$. 
We show the relative error using
the explicit formula.
}} \label{tab:norm:s}}
\end{table}

We observe from Tables~\ref{tab:norm:1d}, \ref{tab:norm:2d} and
\ref{tab:norm:s} that the accuracy of the computation 
when the required number of Fourier coefficients
becomes too large. This fact is probably related to the fact that the FFT
algorithm does not minimize the round-off errors.

\section{Numerical experiments}\label{sec:numerical1}

Along this section we present several numerical explorations
to illustrate the sharpness of the inequalities
in~\eqref{eq:inequalities}. Since $F_{\rho,\delta,\omega} v$
depends on multiple parameters, we organize the computations in different
subsections.
All computations in this section have been performed with $30$ digits.

\subsection{The case of the golden number}\label{ssec:1d:golden}

Here we consider the special case where $\omega = \tfrac{\sqrt{5}-1}{2}$ is the
golden mean. 
It is well known (see
e.g.~\cite{CellettiC07}) that this number 
has Diophantine constants
\[
\gamma = \frac{3-\sqrt{5}}{2}, \qquad \tau=1\,.
\]

In Figure~\ref{fig:golden_cR} we show the values of $c_R(\delta)$ and $c_R^0$
associated to this number.  In the left plot we use the value $\tau=1$ and
observe how $c_R(\delta)$ improves the bound given by the
classic constant $c_R^0$.  In the right plot we compare $c_R(\delta)$ and
$c_R^0$ using $\tau=1.2$. This choice is very interesting from the KAM point of
view since it ensures a positive measure of Diophantine numbers in a
neighborhood of $\omega$. In this case, we observe that the improvement of
\emph{ad hoc} estimates $c_R(\delta)$ increases when $\delta$ is small. 

\begin{figure}[!t]
\centering
\includegraphics[scale=0.45]{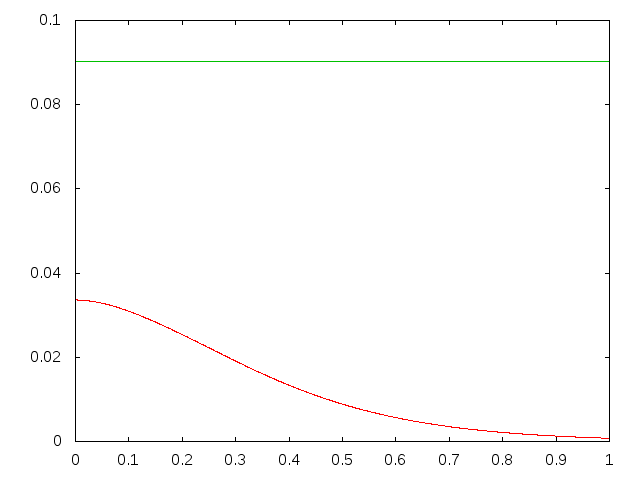}
\includegraphics[scale=0.45]{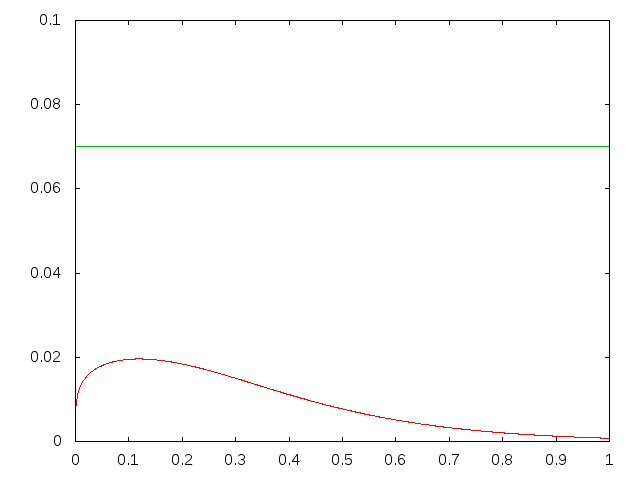}
\caption{{\footnotesize
Left ($\tau=1$) and right ($\tau=1.2$) plots: we show the curve
$\delta \mapsto c_R(\delta)$ (in red), and the constants $c_R^0$ (in green).
}}
\label{fig:golden_cR}
\end{figure}

\begin{figure}[!t]
\centering
\includegraphics[scale=0.31]{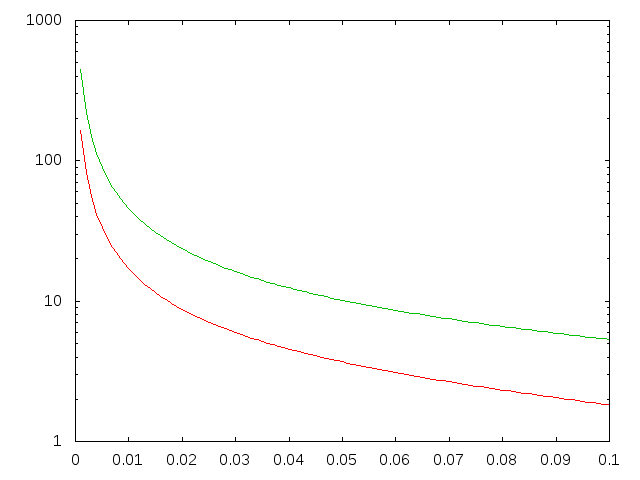}
\includegraphics[scale=0.31]{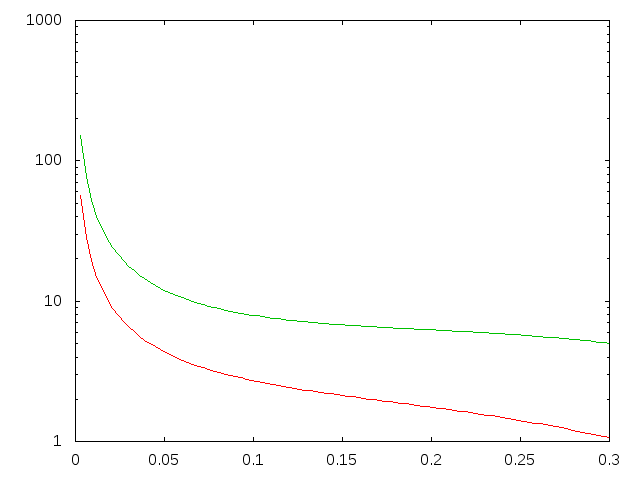}
\includegraphics[scale=0.31]{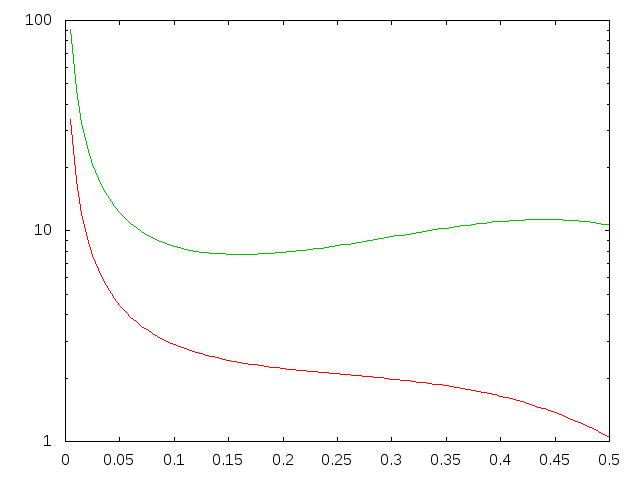}
\includegraphics[scale=0.31]{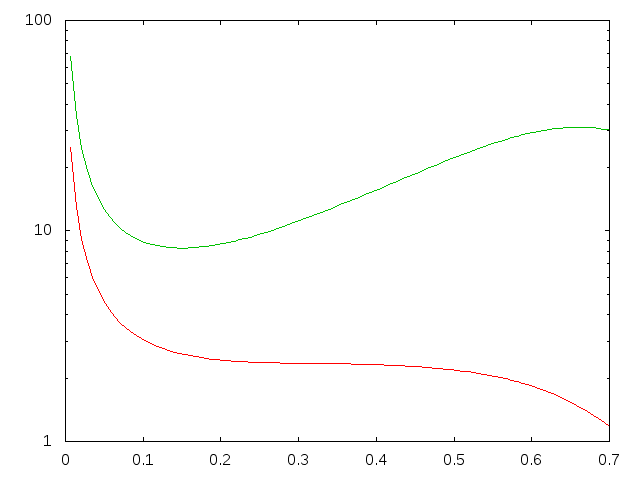}
\includegraphics[scale=0.31]{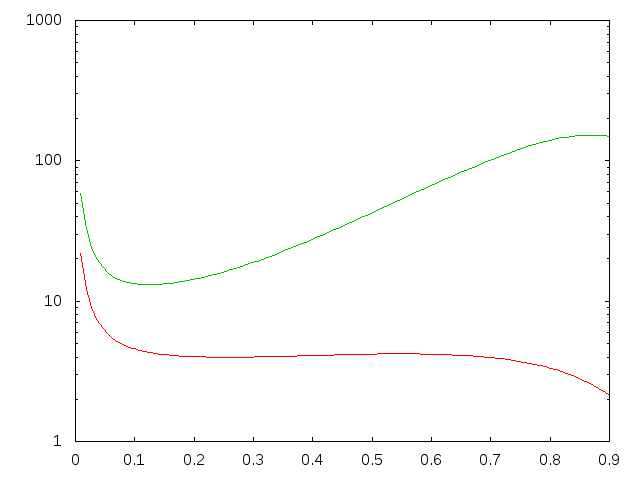}
\includegraphics[scale=0.31]{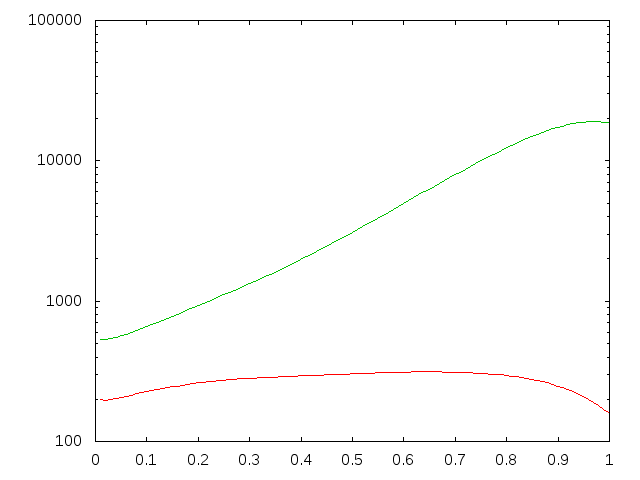}
\caption{{\footnotesize
Overestimation (in $\log_{10}$-scale) of the R\"ussmann estimates ($y$-axis) versus $\delta$ ($x$-axis).
We consider the golden rotation and the function $v_0$ with $\hat\rho=1$. 
Every plot corresponds to a different value of $\rho$ in $\{0.1, 
0.3, 
0.5, 
0.7, 
0.9, 
0.999\}$. The red curve corresponds to~\eqref{eq:ineq1}, which is the
overestimation using \emph{ad hoc} estimates. The green curve 
corresponds to~\eqref{eq:ineq2}.
}}
\label{fig:F1:golden_v0}
\end{figure}
\begin{figure}[!t]
\centering
\includegraphics[scale=0.31]{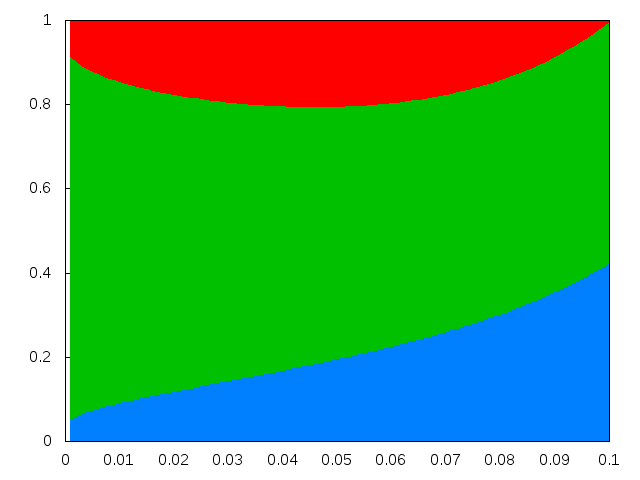}
\includegraphics[scale=0.31]{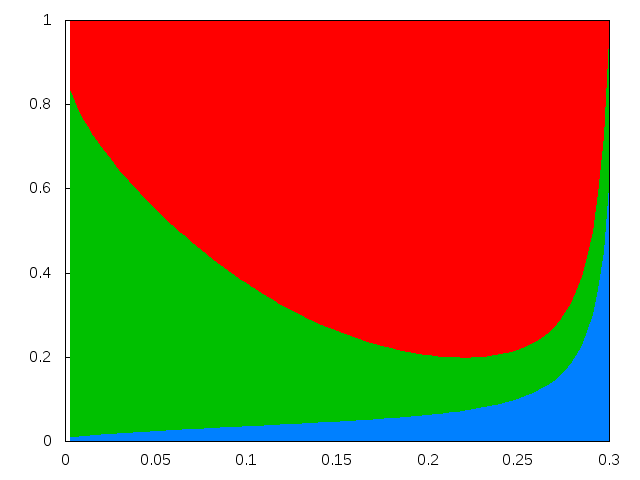}
\includegraphics[scale=0.31]{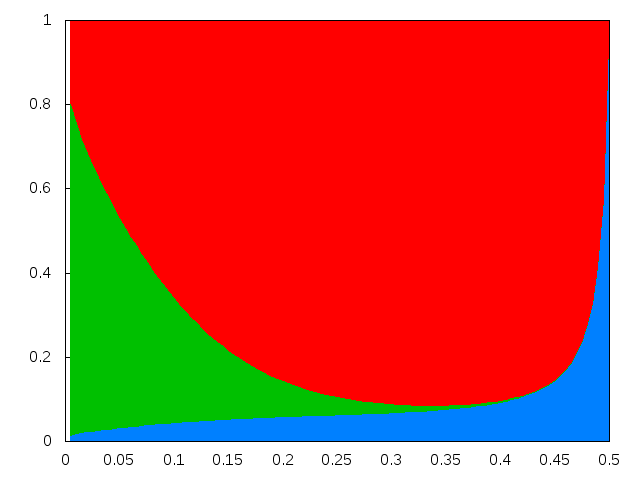}
\includegraphics[scale=0.31]{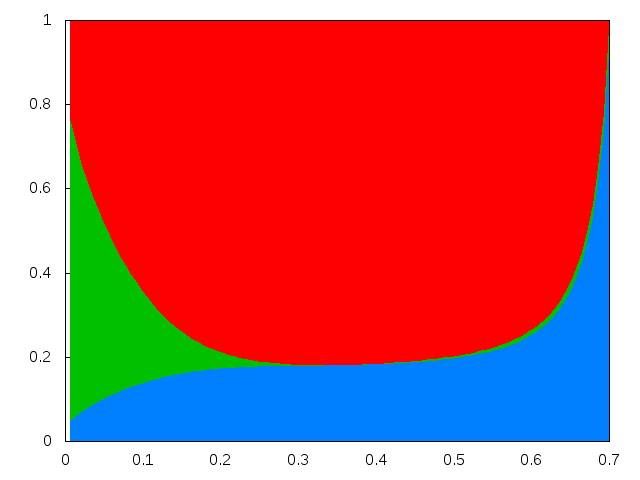}
\includegraphics[scale=0.31]{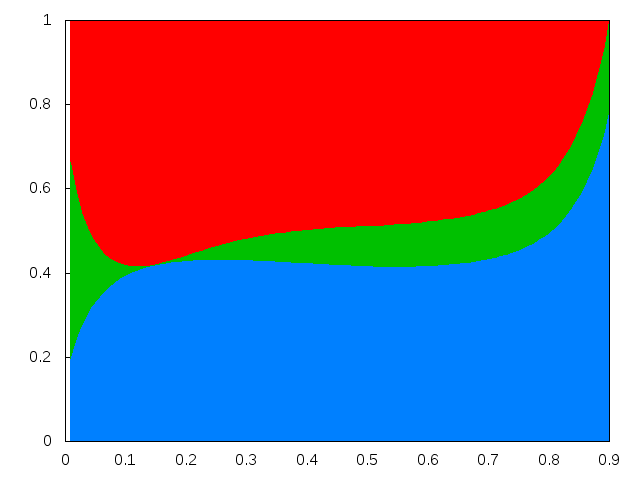}
\includegraphics[scale=0.31]{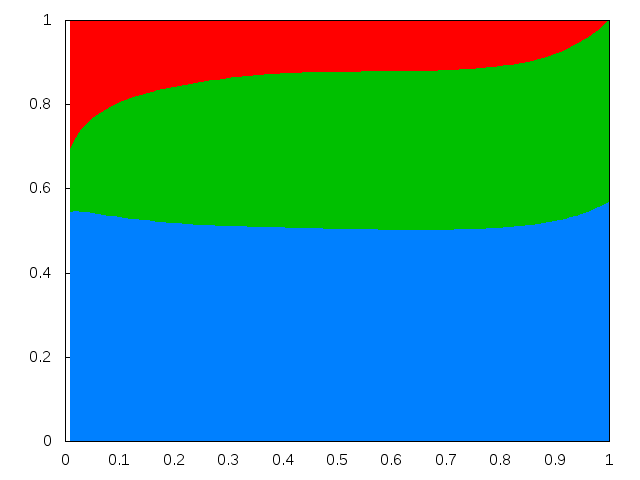}
\caption{{\footnotesize
Fraction (in logarithmic scale) of the contribution of different error sources
$I_1$, $I_2$, and $I_3$ ($y$-axis) versus $\delta$ ($x$-axis); given 
by~\eqref{eq:c1}, \eqref{eq:c2}, and~\eqref{eq:c3}, respectively.
We consider the golden rotation and the function $v_0$ with $\hat\rho=1$. 
Every plot corresponds to a different value of $\rho$ in $\{0.1, 
0.3, 
0.5, 
0.7, 
0.9, 
0.999\}$. 
The red area is 
$\tfrac{\log(I_1)}{\log(I_R)}$,
the green area is 
$\tfrac{\log(I_2)}{\log(I_R)}$,
the blue area is $\tfrac{\log(I_3)}{\log(I_R)}$.
}}
\label{fig:F1:golden_v0:error}
\end{figure}

In order to quantify how good are the R\"ussmann estimates for a given function,
we consider now the function $v_0$, given by Equation~\eqref{eq:FAM1} with $a_k=1$ for every $k \neq 0$,
and  $\hat\rho=1$.
Given a value $\rho<1$, we consider the values of $\delta = \frac{j}{100}\rho$,
for $j=1 \div 100$, and compute $F_{\rho,\delta,\omega} v_0$ and
$c_R(\delta)$. In Figure~\ref{fig:F1:golden_v0} we show the overestimation of the two inequalities
in~\eqref{eq:inequalities}, using $\log_{10}$-scale. In all computations we use
$\tau=1$.
The red curve 
corresponds to
\begin{equation}\label{eq:ineq1}
\delta \longmapsto
I_R:= \frac{c_R(\delta)}{\gamma \delta} \frac{1}{F_{\rho,\delta,\omega} v_0}\,,
\end{equation}
which stands for the overestimation produced by the \emph{ad hoc} R\"ussmann estimates.
The overestimation produced by classic R\"ussmann estimates is given by the curve
\begin{equation}\label{eq:ineq2}
\delta \longmapsto
\frac{c_R^0}{\gamma \delta} \frac{1}{F_{\rho,\delta,\omega} v_0}\,,
\end{equation}
which is plotted in green.  The results show that the use of \emph{ad hoc}
estimates $c_R(\delta)$ outperforms the classic bound $c_R^0$. In some cases
by several orders of magnitude. We observe the following:

\begin{itemize}
\item If $\rho$ is far from $\hat\rho$, the Fourier coefficients of both
$v_0(\theta \pm\ii \rho)$ and $\R v_0 (\theta \pm\ii (\rho-\delta))$
decay very fast, and the effect of the high order small
divisors is negligible. For this reason, we observe a large overestimation for
small values of $\delta$ in all cases
(see the discussion in Remark~\ref{rem:polys}).

\item The best results are obtained for intermediate values of $\rho$ (that is
$\rho \in (0.3,0.8)$ and not very small values of $\delta$ (say $\delta \in
(\rho/5,\rho]$). In these cases, we have $I_R \in (1,4]$.
We also observe that if $0 \ll \delta \approx \rho$ the ad
hoc R\"ussmann estimates provide a sharp upper bound:
$F_{\rho,\delta,\omega} v \approx c_R(\delta) \gamma^{-1} \delta^{-\tau}$.

\item If $\rho$ approaches $\hat \rho$, then the
performance of the R\"ussmann estimates deteriorates. This is because the
factor \eqref{eq:factor2} becomes very large. For example, according to
Table~\ref{tab:norm:1d}, we have $\snorm{v_0}_{0.999} \approx 159$ and
$\snorm{\R v_0}_0$ is small.
\end{itemize}

In Figure~\ref{fig:F1:golden_v0:error} we break down $I_R$ into the sources
of overestimation $I_1$, $I_2$, and $I_3$ of the \emph{ad hoc} R\"ussmann
estimates. These charts allows us to observe, in case that the overestimation
is significant, which is the relative contribution of each factor. 

\bgroup
\def\arraystretch{1.6}
\begin{table}[!h]
\centering
{\scriptsize
\begin{tabular}{|c|c c c c c c c c c c c c c c|}
\hline
$\rho$ & 0.1 & 0.2 & 0.3 & 0.4 & 0.5 & 0.6 & 0.7 & 0.8 & 0.9 & 0.93 & 0.96 &
0.99 & 0.993 & 0.999 \\
$\delta$ & 0.02 & 0.04 & 0.06 & 0.08 & 0.1 & 0.12 & 0.14 & 0.16 & 0.18 & 0.186 &
0.192 & 0.198 & 0.1986 & 0.1998 \\
Eq.~\eqref{eq:ineq1}
& 8.7 & 5.0 & 3.8 & 3.2 & 2.9 & 2.7 & 2.7 & 2.9 & 4.1 & 5.1 & 7.9 & 27.4 & 38.5
&  261.3 \\
Eq.~\eqref{eq:ineq2}
& 23.6 & 13.6 & 10.6 & 9.2 & 8.5 & 8.1 & 8.3 & 9.4 & 13.8 & 17.7 & 27.6 & 96.9 &
136.6 & 929.8 \\
$I_1$ & 1.47 & 1.78 & 1.93 & 2.00 & 2.02 & 2.02 & 2.03 & 2.09 & 2.22 & 2.27 &
2.33 & 2.39 & 2.40 & 2.41 \\
$I_2$ & 4.61 & 2.58 & 1.91 & 1.57 & 1.37 & 1.23 & 1.12 & 1.03 & 1.01 & 1.05 &
1.20 & 2.03 & 2.38 & 6.07 \\
$I_3$ & 1.29 & 1.09 & 1.04 & 1.03 & 1.05 & 1.09 & 1.17 & 1.34 & 1.81 & 2.15 &
2.83 & 5.64 & 6.74 & 17.84 \\
\hline
$\rho$ & 0.1 & 0.2 & 0.3 & 0.4 & 0.5 & 0.6 & 0.7 & 0.8 & 0.9 & 0.93 & 0.96 &
0.99 & 0.993 & 0.999 \\
$\delta$ & 0.1 & 0.2 & 0.3 & 0.4 & 0.5 & 0.6 & 0.7 & 0.8 & 0.9 & 0.93 & 0.96 &
0.99 & 0.993 & 0.999 \\
Eq.~\eqref{eq:ineq1}
& 1.8 & 1.2 & 1.1 & 1.0 & 1.0 & 1.1 & 1.2 & 1.4 & 2.1 & 2.8 & 4.5 & 16.4 & 23.2
& 159.7 \\
Eq.~\eqref{eq:ineq2}
& 5.3 & 4.2 & 5.0 & 7.0 & 10.7 & 17.4 & 30.2 & 58.7 & 150.1 & 229.9 & 430.5 & 1839.4 &
2644.8 & 18754.4 \\
$I_1$ & 1.00 & 1.00 & 1.00 & 1.00 & 1.00 & 1.00 & 1.00 & 1.00 & 1.00 & 1.00 &
1.00 & 1.00 & 1.00 & 1.00 \\
$I_2$ & 1.41 & 1.09 & 1.02 & 1.00 & 1.00 & 1.00 & 1.01 & 1.04 & 1.18 & 1.30 &
1.59 & 2.90 & 3.44 & 8.93  \\
$I_3$ & 1.29 & 1.09 & 1.04 & 1.03 & 1.05 & 1.09 & 1.17 & 1.34 & 1.81 & 2.15 &
2.83 & 5.64 & 6.74 & 17.84 \\
\hline
\end{tabular}
\caption{{\footnotesize 
Some selected computations of the R\"ussmann estimates.
We consider the golden rotation and the function $v_0$ with $\hat\rho=1$. 
The upper table corresponds to $\delta=\tfrac{\rho}{5}$ and the lower table corresponds to $\delta=\rho$.
}} \label{tab:F1:golden}}
\end{table}
\egroup

Table~\ref{tab:F1:golden} reflects quantitatively the above observations.  In
the upper table, we take different values of $\rho$ and values of $\delta$ of
the form $\delta = \tfrac{\rho}{5}$, which corresponds to a choice that is
typical when applying the KAM theorem. For $\rho \in (0.3,0.8)$ we observe that
the \emph{ad hoc} R\"ussmann estimates only overestimate the norm of $\snorm{\R
v_0}_{\rho-\delta}$ by a factor at most $4$ (typically, smaller than $3$). The
lower table corresponds to the same values of $\rho$, but we take
$\delta=\rho$. This illustrates the limit case where \emph{ad hoc} estimates
tend to be sharp (for intermediate values of $\rho$) and the difference with
respect to the classic estimates is larger.

\begin{remark}\label{rem:other:exp}
Using other functions we have obtained analogous plots as in Figure
\ref{fig:F1:golden_v0}.  In particular, we have considered a collection of
$10^5$ functions $v_{s,\{a_k\}}$, given by Equation~\eqref{eq:FAM1}, with
$\hat\rho=1$ and (uniform) random numbers $a_k \in \bar \DD$.  If one
reproduces Figure~\ref{fig:F1:golden_v0:error} for the average behavior of
these functions, then the obtained plots are almost identical to the ones
obtained for $v_0$.  Moreover, if we increase the value of the parameter $s$ we
observe that the good performance of \emph{ad hoc} R\"ussmann estimates for $0
\ll \delta \approx \rho$ prevails when $\rho \approx \hat \rho$ (notice that
$\snorm{v_s}_{\hat\rho}$ is finite for $s>1$). These figures are omitted in order
to avoid an unnecessary lengthy paper.
\end{remark}

\begin{remark}
Observe that we are dealing with functions with an infinite number
of Fourier coefficients but with an explicit control of their decay.
It is worth mentioning what happens when one perturbs these functions
by including eventually a large Fourier coefficient. For example, by
considering functions of the form
\[
v_{s,\{a_k\}} (\theta)= 
\sum_{k \in \ZZ^n \backslash \{0\}} \hat v_k \ee^{2\pi \ii k \cdot \theta}
+
\sum_{k \in \INF \backslash\{0\}} \hat w_k \ee^{2\pi \ii k \cdot \theta}
\,,
\qquad
\hat v_k = a_k \frac{\ee^{-2\pi |k|_1 \hat \rho}}{|k|_1^s}\,, 
\qquad \hat w_k \in \bar \DD\,.
\]
In this situation, one observes a poor performance of the R\"ussmann estimates:
$F_{\rho,\delta,\omega} v \ll c_R(\delta) \gamma^{-1} \delta^{-\tau}$. A
heuristic justification of this observation follows from the computation in~\eqref{eq:R1harm}.
\end{remark}

Finally, we suitably choose a family of functions that are selected to saturate
the Cauchy-Schwarz inequality in Equation~\eqref{eq:Cauchy-Schwarz}. We consider
functions of the form
\begin{equation}\label{eq:FAM3}
v_{\{b_k\}}(\theta)
= \sum_{k \in \ZZ^n\backslash \{0\} } 
\hat v_k \ee^{2\pi \ii k \cdot \theta}\,,
\qquad
\hat v_k = \frac{b_k}{\sin(\pi \omega \cdot k)} \ee^{-2\pi |k|_1 \hat \rho} \,,
\qquad
b_k \in \bar \DD\,.
\end{equation}
Notice that this family satisfies $I_2=1$ when we select
$0<\delta\leq \rho<\hrho$ such that $\hrho=\rho+\delta$.
In the left plot of Figure~\ref{fig:F3:golden_vs_disk_prop} we reproduce
the computations presented in Figures~\ref{fig:F1:golden_v0}
and~\ref{fig:F1:golden_v0:error} for the function~\eqref{eq:FAM3}
taking $b_k=1$ for every $k$. We observe a behavior that is very close to
the Family~\eqref{eq:FAM1}.
In the center plot
of Figure~\ref{fig:F3:golden_vs_disk_prop} we show the overestimation produced by the
\emph{ad hoc} R\"ussmann estimates
for $10^5$ functions of Family~\eqref{eq:FAM1} with (uniform) random numbers $a_k
\in \bar \DD$ and
$10^5$ functions of Family~\eqref{eq:FAM3} with (uniform) random numbers $b_k \in
\bar \DD$. In the range $\delta \in [\tfrac{\rho}{4.5},\tfrac{\rho}{1.5}]$ we observe
that $\min_v {I_R} \approx 2.3$. Finally, in the right plot Figure
\ref{fig:F3:golden_vs_disk_prop} we show
average behavior (including all the selected functions) of the contribution
of each source of overestimation.

\begin{figure}[!t]
\centering
\includegraphics[scale=0.31]{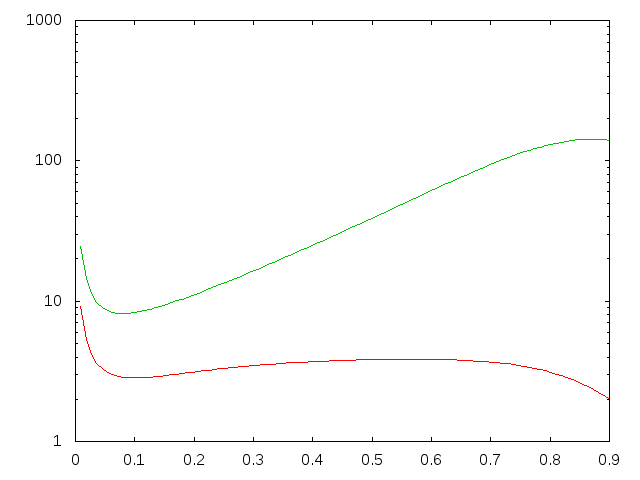}
\includegraphics[scale=0.31]{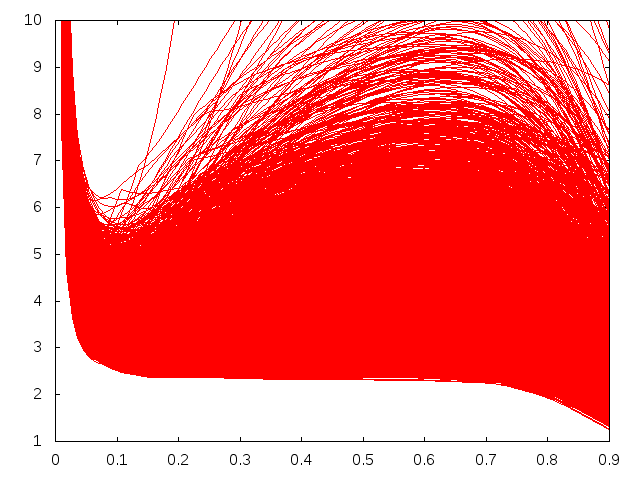}
\includegraphics[scale=0.31]{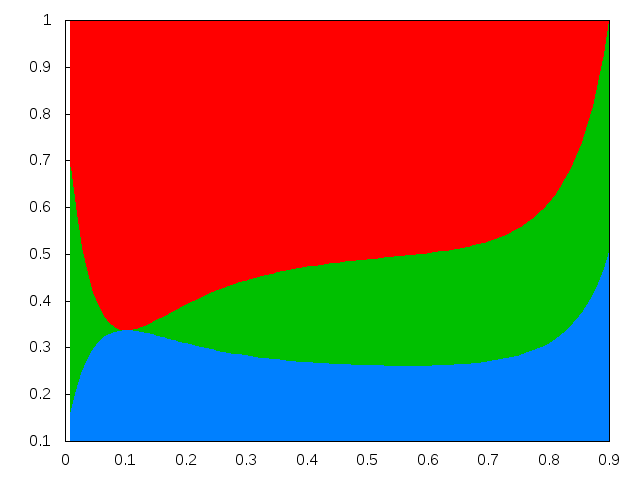}
\caption{{\footnotesize
We take $\rho=0.9$ in all plots.
Left plot: 
Overestimation (in $\log_{10}$-scale) of the R\"ussmann estimates ($y$-axis) versus $\delta$ ($x$-axis).
We consider the function $v_{\{b_k\}}$, given by~\eqref{eq:FAM3} with $b_k=1$ and $\hat\rho=1$. 
The red curve corresponds to~\eqref{eq:ineq1} and
the green curve
corresponds to~\eqref{eq:ineq2}.
Center plot:
Overestimation (in normal scale) of the \emph{ad hoc} R\"ussmann estimates ($y$-axis) versus $\delta$ ($x$-axis).
We consider $10^5$ function given by~\eqref{eq:FAM1} with $\hat\rho=1$, and 
$10^5$ function given by~\eqref{eq:FAM3} with $\hat\rho=1$. 
Right plot:
Average fraction (in logarithmic scale) of the contribution of different error sources
associated to the center plot
(see Figure~\ref{fig:F1:golden_v0:error} for color description).
}}
\label{fig:F3:golden_vs_disk_prop}
\end{figure}

\subsection{Dependence on $\omega$}\label{ssec:1d:other:rot}

Let us consider now the dependence of the R\"ussmann estimates on $\omega$.
We select the numbers
\[
\omega = \sin \left(\frac{0.02+0.5\,j}{10000}\right)\,,
\qquad 
\mbox{for $j = 1 \div 10^4$}\,.
\]
These numbers behave like Diophantine numbers and are eventually ``close'' to
low order resonances. In order to associate a pair $(\gamma,\tau)$ for each
value of $\omega$ we use two different approaches:
\begin{description}
\item [Method 1:]
Approximate $\omega$ by a quadratic number $\omega_Q$ as follows
\[
\omega \simeq \omega_Q = [a_0,a_1,\ldots,a_Q,1,1,1,\ldots]
= [a_0,a_1,\ldots,a_Q,1^\infty]\,,
\]
where $a_i$ are obtained computing the truncated continued fraction of $\omega$.
Then we
take $\tau=1$ and compute the constant $\gamma$ associated to $\omega_Q$
(see details in~\cite[Appendix B]{CellettiC07}). 

\item [Method 2:]
We enclose $\omega$ into a tiny interval and
we assign to this interval a pair $(\gamma,\tau)$ that ensures 
that the relative measure of $(\gamma,\tau)$-Diophantine numbers
is positive. We refer to~\cite[Section 4.1]{FiguerasHL} for details.
\end{description}

We first restrict the analysis to the
function $v_0$, given by Equation~\eqref{eq:FAM1} with $a_k=1$ for every $k
\neq 0$, and we fix the parameters $\delta=0.1$, $\rho=0.5$, and $\hat\rho=1$. 
We have obtained analogous results for other functions of the family (selecting
$a_k$ randomly).
In Figure~\ref{fig:F1:move_omega_prop} we plot
\begin{equation}\label{eq:ineq1:omega}
\omega \longmapsto
I_R:= \frac{c_R(\delta)}{\gamma \delta^\tau} \frac{1}{F_{\rho,\delta,\omega} v_0}
\end{equation}
in red (first and second plots), and
\begin{equation}\label{eq:ineq2:omega}
\omega \longmapsto
\frac{c_R^0}{\gamma \delta^\tau} \frac{1}{F_{\rho,\delta,\omega} v_0}
\end{equation}
in green (first plot).
It is worth mentioning that~\eqref{eq:ineq1:omega} seems to
be a curve which is regular in the sense of Whitney. Moreover,
this curve  is independent of the values
of $(\gamma,\tau)$ if $L$ is taken large enough.
On the contrary, the classic
estimates present a ``bad'' dependence on $\omega$, 
in the sense that they produce a large overestimation quite often
(between 100 and 10000 times larger). 
Let us compare also the sensitivity of classic R\"ussmann
estimates with respect to the method used to obtain $(\gamma,\tau)$.
In general, Method 2 provides better estimates than
Method 1, capturing also the effect of resonances. 

\begin{figure}[!t]
\centering
\includegraphics[scale=0.45]{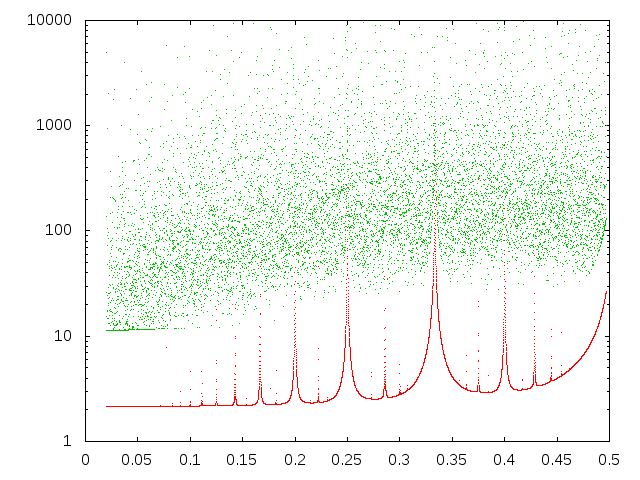}
\includegraphics[scale=0.45]{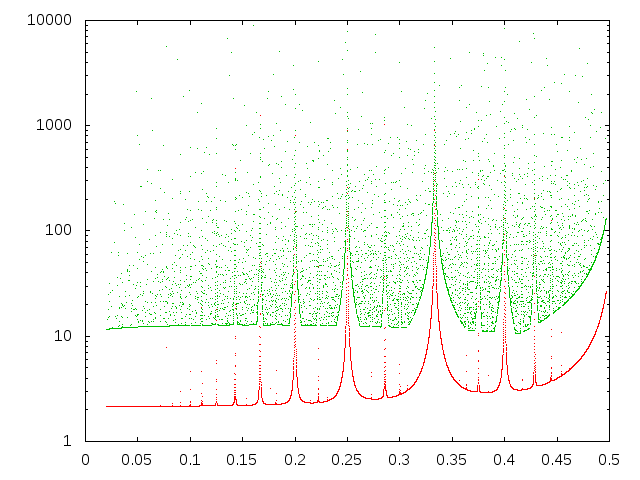}
\caption{{\footnotesize:
Overestimation (in $\log_{10}$-scale) of the R\"ussmann estimates ($y$-axis) versus $\omega$ ($x$-axis).
We consider the function $v_0$, given by~\eqref{eq:FAM1} with $\hat\rho=1$. 
We take $\rho=0.5$ and $\delta=0.1$ in all plots. 
We show~\eqref{eq:ineq1:omega} in red and~\eqref{eq:ineq2:omega} in green.
In the left plot we use Method 1 and in the right plot we use Method 2.
}}
\label{fig:F1:move_omega_prop}
\end{figure}

\begin{figure}[!t]
\centering
\includegraphics[scale=0.45]{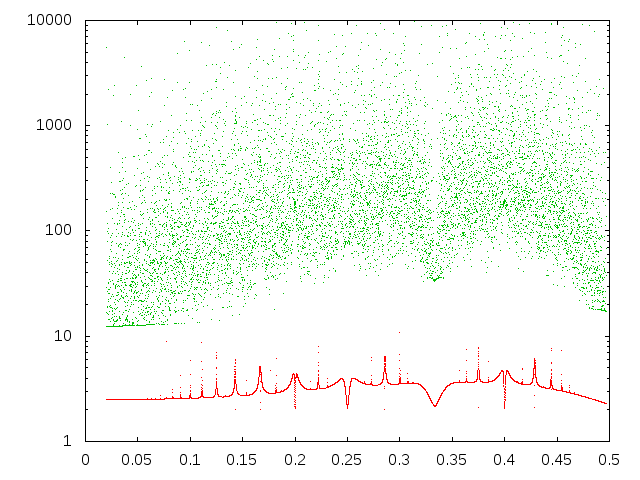}
\includegraphics[scale=0.45]{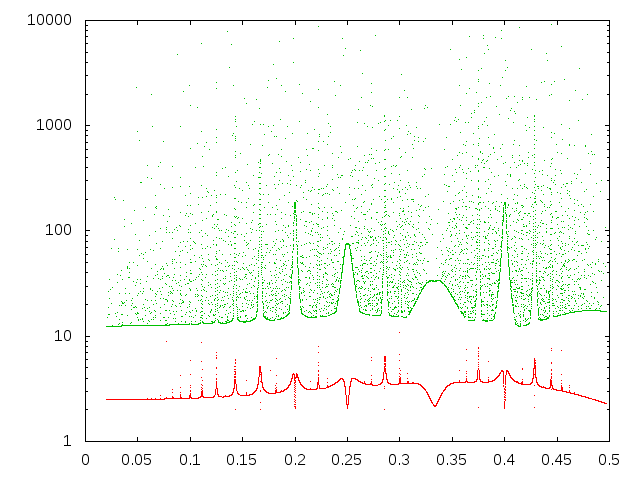}
\caption{{\footnotesize:
Overestimation (in $\log_{10}$-scale) of the R\"ussmann estimates ($y$-axis) versus $\omega$ ($x$-axis).
We consider $10^{4}$ function $v_{\{b_k\}}$, given by~\eqref{eq:FAM3} with $\hat\rho=1$
and random numbers $b_k \in \bar \DD$. 
We take $\rho=0.5$ and $\delta=0.1$ in all plots. 
We show~\eqref{eq:ineq1:omega} in red and~\eqref{eq:ineq2:omega} in green.
In the left plot we use Method 1 and in the right plot we use Method~2.
}}
\label{fig:F3:move_omega_disk_prop}
\end{figure}

Analogous computations are presented in
Figure~\ref{fig:F3:move_omega_disk_prop}. In this case, we consider $10^4$
functions of the form~\eqref{eq:FAM3} by taking (uniform) random numbers $b_k \in
\bar \DD$. For each value of $\omega$, we solve the cohomological equations and
obtain the R\"ussmann estimates for all these functions. 
We have obtained similar results for all the considered functions.
In the plots we
show the average behavior observed. The results are analogous to the ones
obtained for the function $v_0$ given by~\eqref{eq:FAM1},
the only remarkable difference being the behavior
observed for ``bad'' Diophantine numbers (i.e. close to resonances). In this case, due to the particular
relationship between the size of Fourier coefficients and the small divisors, the improvement
of the \emph{ad hoc} R\"ussmann estimates becomes even larger.

In Figure~\ref{fig:F1F3:sources:error} we show the contribution of the different
sources of error in the previous computations. This illustrates that
the relative importance of $I_1$, $I_2$ and $I_3$ depends on the family of
functions under consideration. As it was discussed in
Section~\ref{ssec:1d:golden}, the difference is not significant
when $\omega$ is the golden mean. But Figure~\ref{fig:F1F3:sources:error} shows
that we can observe different scenarios for other values of $\omega$.
For example, we find that when $\omega$ is close to a resonance the overestimation $I_2$
is dominant in the left plot and irrelevant in the right plot.

\begin{figure}[!t]
\centering
\includegraphics[scale=0.45]{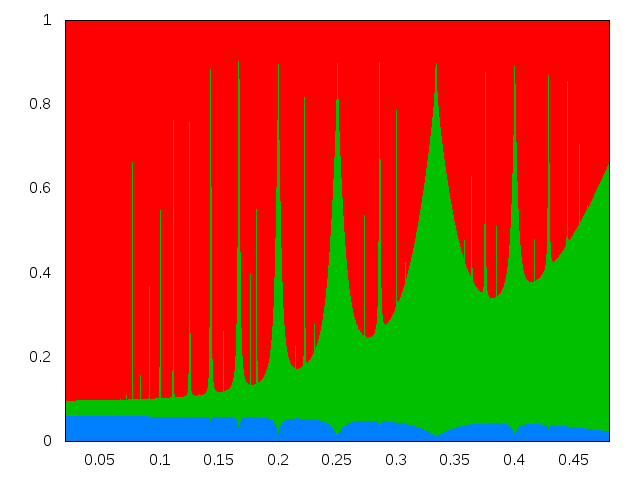}
\includegraphics[scale=0.45]{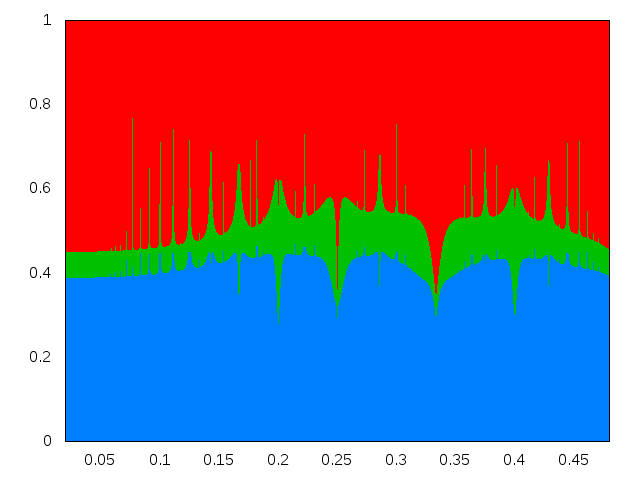}
\caption{{\footnotesize:
Average fraction (in logarithmic scale) of the contribution of different error sources
$I_1$, $I_2$, and $I_3$
($y$-axis) versus $\delta$ ($x$-axis) to the overestimation produced the \emph{ad hoc} R\"ussmann estimates.
Left plot corresponds to computations in Figure~\ref{fig:F1:move_omega_prop}, and
right plot corresponds to computations in
Figure~\ref{fig:F3:move_omega_disk_prop}.
See Figure~\ref{fig:F1:golden_v0:error} for color description.
}}
\label{fig:F1F3:sources:error}
\end{figure}


\subsection{Two dimensional case}\label{sec:numerical2}

Finally, we present some computations to quantify the performance
of R\"ussmann estimates in the case $n=2$. Now we consider
frequency vectors of the form $\omega=(\omega_1,\omega_2)$, with
\[
\omega_1= \frac{\sqrt{5}-1}{2}\,,
\qquad
\omega_2 = \sin \left(\frac{0.02+0.5\,j}{10000}\right)\,,
\qquad 
\mbox{for $j = 1\div 10^4$}\,.
\]
The behavior of the estimates
agrees qualitatively with the results previously
discussed. Hence, in order to reduce the exposition, here we
only consider the study of the function $v_0$ for 
the fixed choice of parameters $\rho=0.9$ and $\delta=0.18$.
Our aim is to illustrate quantitatively the dependence
on $\omega$ when the dimension increases.

The left plot of Figure~\ref{fig:F1:D2} is analogous to the right plot of
Figure~\ref{fig:F1:move_omega_prop}.  In this case, in order to obtain
constants $(\gamma,\tau)$ associated to $\omega$ we use 
Method 2, since Method 1 is not valid (there is no continuous
fraction expansion).  Again, we observe that \emph{ad hoc} R\"ussmann estimates depend in
a nice way on the frequency. The red points seems to be on a curve which has
some regularity in the sense of Whitney.  The overestimation of classic
estimates with respect to the \emph{ad hoc} ones is quite equivalent as in the
one dimensional case. However, we observe that the overestimation of the true
norm produced by both estimates is significantly larger, at least a factor 10.

In the right plot Figure~\ref{fig:F1:D2} we show
the three sources of overestimation. Notice that
the blue area (corresponding to $I_3$) is larger
that the blue area observed in the left plot of
Figure~\ref{fig:F1F3:sources:error}, and the main
responsible for the overestimation with respect to the case $n=1$.

\begin{figure}[!t]
\centering
\includegraphics[scale=0.45]{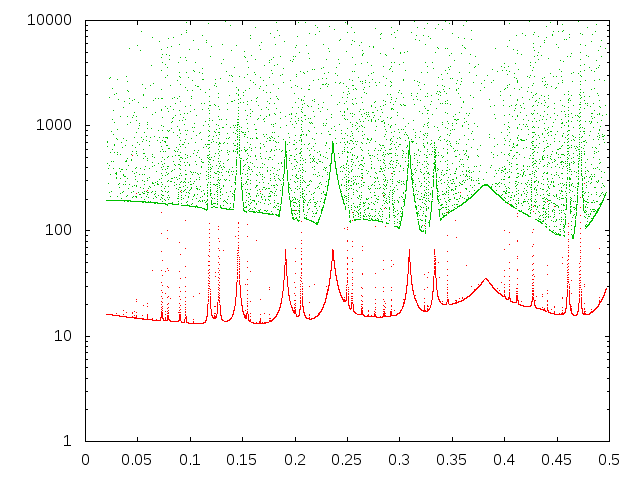}
\includegraphics[scale=0.45]{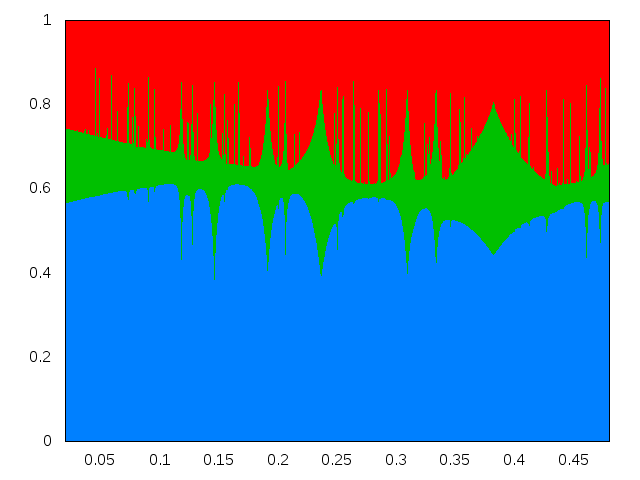}
\caption{{\footnotesize:
Left plot: Overestimation (in $\log_{10}$-scale) of the R\"ussmann estimates ($y$-axis)
versus $\omega_2$ ($x$-axis) for fixed $\omega_1$). 
We consider the function $v_0$, given by~\eqref{eq:FAM1} with $\hat\rho=1$,
$\rho=0.9$ and $\delta=0.18$.
Right plot:
contribution 
(in logarithmic scale) 
of different error sources
$I_1$, $I_2$, and $I_3$
($y$-axis) versus $\delta$ ($x$-axis) to the overestimation produced by the
\emph{ad hoc} R\"ussmann estimates in the computation of the left plot.}}
\label{fig:F1:D2}
\end{figure}

\section{Final remarks and conclusions}\label{sec:conclusions}

Here we summarize some observations and questions that arise after the presentation of the
previous results.

\begin{itemize}
\item We observe that if $0 \ll \delta \approx \rho$ and intermediates values of
$\rho$, the ad
hoc R\"ussmann estimates provide a sharp upper bound:
$F_{\rho,\delta,\omega} v \approx c_R(\delta) \gamma^{-1} \delta^{-\tau}$. This
can be readily justified by observing that the three expressions $I_1$, $I_2$,
and $I_3$ tend to saturate in this range of parameters.
Furthermore, in Figure~\ref{fig:F1:golden_v0} we observe that,
at this range of parameters, the overestimation $\delta \mapsto I_R(\delta)$
seems to be proportional to $\rho-\delta$, as $\delta \rightarrow \rho$. 
It is of interest to know if this remains true for typical functions.

\item It is clear that the three sources of overestimation $I_1$, $I_2$, and $I_3$ are
sharp independently, but in general they are not sharp jointly. For example,
$I_1$ is saturated by functions with just one harmonic or, if $n=1$,
by functions with positive Fourier coefficients only supported at $k>0$;
$I_2$ is saturated by functions of the form~\eqref{eq:FAM3} when $\rho+\delta=\hat
\rho$; and $I_3$ is saturated by functions which are almost constant at the
boundary of the complex strip $\TT_\rho$. It seems not clear how to minimize
these three overestimations simultaneously.
We have numerically observed that at certain regimes this minimization leads to
a minimum overestimation $I_R \approx 2.3$ (see right plot in
Figure~\ref{fig:F3:golden_vs_disk_prop}) for the case $n=1$ and
$\omega=\frac{\sqrt{5}-1}{2}$.

\item We have analyzed the diferent sources of overestimation $I_R=I_1 I_2 I_3$ when
a certain function or family of function is fixed. The contribution of each
source to the total overestimation depends on the parameters $\delta$, $\rho$
and $\hat\rho$. When specific regimes of the parameters or subfamilies of
functions are considered, this information could be
interesting to tailor Theorem~\ref{lem-Russmann} to improve the estimates.

\item When $\omega$ is a generic Diophantine number, the use of \emph{ad hoc} R\"ussmann estimates outperforms
the use of classic estimates by several orders of magnitude (see Section~\ref{ssec:1d:other:rot}).
Moreover, 
we observe a nice behavior of the overestimation
$I_R$ with respect to $\omega$. Actually, it seems to exhibit some regularity
(in the sense of Whitney) with respect to $\omega$.
\end{itemize}

\section*{Acknowledgements}

We acknowledge the use of the Lovelace (ICMAT-CSIC) cluster for research computing,
co-financed by the Spanish Ministry of Economy and Competitiveness (MINECO)
, European FEDER funds and the Severo Ochoa Programme.
J.-Ll. F. acknowledges the partial support of Essen, L. and C.-G., for
mathematical studies.
A. H. acknowledges 
support from the Spanish grant MTM2015-67724-P, and the Catalan grant 2014-SGR-1145.
A. L. acknowledges support from the Severo Ochoa Programme for Centres of Excellence
in R\&D (SEV-2015-0554), the Spanish grant MTM2016-76072-P, and
the ERC Starting Grant~335079.

\addcontentsline{toc}{section}{References}
\bibliographystyle{plain}
\bibliography{references}
\end{document}